\documentclass[journal]{IEEEtran}

\usepackage{amsmath,amsfonts}
\allowdisplaybreaks

\usepackage{algorithm}
\usepackage{algorithmic}

\usepackage{array}
\usepackage[caption=false,font=normalsize,labelfont=sf,textfont=sf]{subfig}
\usepackage{textcomp}
\usepackage{stfloats}
\usepackage{url}
\usepackage{verbatim}
\usepackage{graphicx}
\usepackage{cite}
\hyphenation{op-tical net-works semi-conduc-tor IEEE-Xplore}

\RequirePackage{amsthm,amssymb,mathrsfs}
\RequirePackage[numbers]{natbib}
\usepackage{hyperref}
\usepackage{enumitem}  
\usepackage{color}
\usepackage{bm,bbm}

\newtheorem{theorem}{Theorem}
\newtheorem{corollary}{Corollary}
\newtheorem{lemma}{Lemma}
\newtheorem{definition}{Definition}
\newtheorem{example}{Example}
\theoremstyle{remark}
\newtheorem{remark}{Remark}

 
\DeclareMathOperator*{\essup}{ess\,sup}
\newcommand{\Real}{{\mathbb{R}}}
\newcommand{\Exp}{\mathbf{E}}
\newcommand{\Pro}{\mathbf{P}}

\newcommand{\cF}{\mathscr{F}}

\newcommand{\1}{\mathbbm{1}}
\newcommand{\ind}[1]{\1_{\{#1\}}}

\newcommand{\algname}{$\delta$-DP-CUSUM}

\begin{document}

\title{Sequential Change Detection with \\ Differential Privacy}

\author{Liyan~Xie and Ruizhi~Zhang
\thanks{Liyan Xie (Email:
liyanxie@umn.edu) is with Department of Industrial and Systems Engineering, University of Minnesota, Minneapolis, MN 55455 USA. Ruizhi Zhang (Email: ruizhi.zhang@uga.edu) is with the Department of Statistics, University of Georgia, Athens, GA 30602 USA.}
\thanks{Manuscript received July 18, 2025; revised October 21, 2025;
accepted December 9, 2025. (Corresponding author: Ruizhi Zhang.)}

}

\markboth{IEEE Transactions on Information Theory}%
{Shell \MakeLowercase{\textit{et al.}}: Bare Demo of IEEEtran.cls for IEEE Journals}


\maketitle

\begin{abstract}
Sequential change detection is a fundamental problem in statistics and signal processing, with the CUSUM procedure widely used to achieve minimax detection delay under a prescribed false-alarm rate when pre- and post-change distributions are fully known. However, releasing CUSUM statistics and the corresponding stopping time directly can compromise individual data privacy. We therefore introduce a differentially private (DP) variant, called DP-CUSUM, that injects calibrated Laplace noise into both the vanilla CUSUM statistics and the detection threshold, preserving the recursive simplicity of the classical CUSUM statistics while ensuring per-sample differential privacy. We derive closed-form bounds on the average run length to false alarm and on the worst-case average detection delay, explicitly characterizing the trade-off among privacy level, false-alarm rate, and detection efficiency. Our theoretical results imply that under a weak privacy constraint, our proposed DP-CUSUM procedure achieves the same first-order asymptotic optimality as the classical, non-private CUSUM procedure. Numerical simulations are conducted to demonstrate the detection efficiency of our proposed DP-CUSUM under different privacy constraints, and the results are consistent with our theoretical findings.
\end{abstract}

\begin{IEEEkeywords}
Sequential change detection, differential privacy, CUSUM, average run length, detection delay. 
\end{IEEEkeywords}

\section{Introduction}

Sequential change detection is a fundamental problem in statistics and signal processing, with applications across a wide range of practical domains. The canonical formulation considers a sequence of observations sampled independently, with an unknown changepoint at which the underlying distribution switches from one distribution to an alternative. The goal of sequential change detection is to detect the occurrence of distributional change with minimal delay while controlling the false-alarm rate. This problem is of major importance in many applications, such as seismic event detection \cite{xie2019asynchronous}, quality control \cite{shi2009quality}, dynamical systems \cite{lai1995sequential}, healthcare \cite{balageas2010structural}, social networks \cite{li2017detecting}, anomaly detection \cite{chandola2009anomaly}, detection of attacks \cite{tartakovsky2014rapid}, etc. However, in many of these applications, the data may contain sensitive personal information
such as financial or medical records \cite{cai2021cost}, while traditional sequential detection procedures typically release the decision directly without appropriate privacy protection. Thus, procedures with
good detection ability while preserving individuals’ information are highly desirable. 

Since Dwork \emph{et al}.’s pioneering work \cite{dwork2006calibrating}, differential privacy has garnered much attention. A wide variety of differentially private procedures with theoretical efficiency guarantees have been developed for many statistical problems, such as point estimation \cite{cai2021cost,avella2020privacy} and hypothesis testing \cite{degue2018differentially,canonne2019structure,zhang2024detection}.
Informally, differential privacy provides a systematic framework for constructing private algorithms or procedures by adding designed random noise, such that the output has a similar distribution regardless of whether the data are present for each individual participant, thereby helping to protect individuals' information in the dataset.

In this article, we study the problem of differentially private sequential change detection when both pre-change and post-change distributions are known and specified. 
We first extend the classical notion of $\epsilon$-differential privacy \cite{dwork2006calibrating} from fixed databases to sequential change detection tasks over potentially infinite data streams, where the sample size is not fixed.
We then develop a differentially private (DP) variant of the CUSUM procedure, called \hbox{DP-CUSUM}, which satisfies the new $\epsilon$-DP constraint. Our proposed DP-CUSUM procedure involves computing the CUSUM statistics while adding an independent Laplace noise at each time step, as well as to the detection threshold. By doing this, we do not release the CUSUM statistics or the threshold directly during the detection procedure. Meanwhile, the DP-CUSUM retains the same computational efficiency as the classical CUSUM procedure, as its computational overhead is minimal when noise is added at each time step. 

Our main contributions can be summarized as follows. First, we introduce a new concept of $\epsilon$-DP for sequential detection procedures when the sample size is not fixed. Second, we construct the DP-CUSUM procedure that satisfies the new $\epsilon$-DP constraint. Third, under the assumption that the log-likelihood ratio statistic is always bounded by a known constant, we prove a nonasymptotic lower bound to the average run length (ARL) of DP-CUSUM, which enables an analytical way of selecting the detection threshold, and a nonasymptotic upper bound to Lorden's worst-case detection delay (WADD) \cite{Lorden1971}. These nonasymptotic results also imply the asymptotic optimality of our proposed DP-CUSUM procedure under weak privacy constraints. Fourth, we extend our results to scenarios with general unbounded log-likelihood ratios by relaxing to a slightly weaker differential-privacy definition that adds a small constant $\delta$ to the DP definition. Finally, we validate our theoretical findings through numerical simulations, illustrating how the choice of privacy parameter affects the delay-versus-false-alarm trade-off in practical settings.

The remainder of this paper is organized as follows. Section\,\ref{sec:setup} provides preliminaries about sequential change detection when both pre-change and post-change distributions are fully specified, and introduces the modified concept of $\epsilon$-DP for sequential detection procedures. Section\,\ref{sec:dpcusum} presents our proposed DP-CUSUM detection procedure and the theoretical analysis when the log-likelihood function is bounded. Section\,\ref{sec:general} extends the method and theory to the general case with unbounded log-likelihood ratios. Finally, Section\,\ref{sec:numerical} presents simulation examples that demonstrate the performance of the proposed DP-CUSUM procedure under distributions with both bounded and unbounded log-likelihood ratios. All proofs are presented in the Appendix.

\subsection{Related Work}

The study of sequential change detection can be traced back to the early work of Page \cite{page-biometrica-1954} and has been studied extensively for several decades. Most works address the detection problem under the assumption of independent observations, particularly when new methods and theories are first introduced, but significant progress has also been made in extending these methods and theories to more complicated data models; see \cite{detectAbruptChange93,poor-hadj-QCD-book-2008,Siegmund1985,tartakovsky2014sequential,tartakovsky2019sequential,veeravalli2013quickest, tutorial_jsait} for thorough reviews in this field.

The first optimality result for sequential change detection appears in \cite{Shiryaev1963}, which focuses on detecting a change in the drift of a Brownian motion. It was studied under a Bayesian framework, where the changepoint was modeled as an independent, exponentially distributed random variable. In contrast, the non‑Bayesian (minimax) setting considers the changepoint to be deterministic but unknown. Under the minimax setting, the CUSUM procedure is arguably the most widely adopted change detection algorithm for the classical setup of detecting a change from a known distribution to a known alternative \cite{page-biometrica-1954}. The CUSUM procedure was first proved to be asymptotically optimum in \cite{Lorden1971} when observations are i.i.d.~before and after the change. Its exact optimality under the same data model was established in \cite{mous-astat-1986}. In addition to its strong optimality guarantees, the CUSUM statistic admits a recursive update, making it computationally efficient for sequential settings that require processing each new sample immediately. Although the problem of change detection when both pre- and post-change distributions are known has been extensively studied in the literature, these procedures do not consider data privacy-preserving guarantees.

First introduced by Dwork et al. \cite{dwork2006calibrating}, differential privacy has
garnered much attention. Many classical statistical procedures
are tailored to satisfy the privacy guarantees \cite{cai2021cost,avella2020privacy,degue2018differentially,canonne2019structure,zhang2024detection}. In particular, in the area of sequential change detection, an $\epsilon$-DP procedure was proposed in \cite{cummings2018differentially} to estimate the change point for sequential data when both pre-change and post-change are fully specified. This method relies on a fixed-size sliding window and applies an offline change-point estimator within each window, which may result in higher memory and computational overhead. Extensions to unknown post-change distributions have been considered in the univariate setting in \cite{cummings2020privately}. More recent efforts have explored broader data modalities and definitions of privacy. For instance, in \cite{berrett2021locally}, sequential change detection for multivariate nonparametric regression was studied under local differential privacy. In \cite{li2022network}, the authors extended the analysis of privatized networks from static to dynamic, underscoring the complexities and challenges of preserving privacy in the dynamic analysis of network data. However, none of these work provide the theoretical detection performance of their proposed private procedures in terms of average run length and the worst-case detection delay, which are two fundamental but important properties for the sequential change detection problem.

\section{Preliminaries and Problem Setup}\label{sec:setup}

In this section, we provide the necessary preliminaries and background on the sequential change detection problem and differentially private tools, then introduce the problem setup for differentially private sequential change detection.

\subsection{Basics of Classical Sequential Change Detection}\label{sec:basic_detection}

Suppose we observe data stream $\{X_t, t \in \mathbb N\}$, where each $X_t \in \Real^k.$  Initially, observations are independently and identically distributed (i.i.d.) following the probability density function (pdf) $f_0.$ At some unknown time $\tau$, an event occurs and changes the distribution of the data after the time to a distinct pdf $f_1.$ That is,
\begin{equation}
X_t  \stackrel{\rm i.i.d.}{\sim}\left\{\begin{array}{ll}
f_0,&t = 1,2,\ldots,\tau,\\[2pt]
f_1,&t = \tau+1,\tau+2,\ldots
\end{array}\right. 
\label{eq:data_model}
\end{equation}
In this article, we assume that $f_0$ and $f_1$ are both {\it known}.
Here, $\tau\in\{0,1,2,\ldots\}$ is a deterministic but unknown change time, and the goal is to raise the
alarm as quickly as possible after the change has occurred, while properly controlling the false alarm rate \cite{tartakovsky2014sequential,poor-hadj-QCD-book-2008,veeravalli2013quickest}.

A sequential change detection procedure consists of a {\it stopping time} $T$, denoting the time at which we stop and declare that a change has occurred before time $T$. Here $T$ is an integer-valued random variable, and the decision $\{T=t\}$ is based only on the observations in the first $t$ time steps. That is, $\{T=t\} \subseteq \cF_t,$ where we define the filtration $\cF_t=\sigma\{X_1,\ldots,X_t\}$ and let $\cF_0$ denote the trivial sigma-algebra. 
To evaluate the performance of $T,$ we further denote by $\Pro_\infty$ and $\Exp_\infty$ the probability measure and corresponding expectation when all samples follow the pre-change distribution $f_0$ (i.e.,~the change occurs at $\infty$). Similarly, we use $\Pro_0$ and $\Exp_0$ to represent the probability measure and expectation when all samples follow the post-change distribution $f_1$ (i.e.,~the change occurs at 0). More generally, we denote by $\Pro_\tau$ and $\Exp_\tau$ the probability measure and expectation when the change happens at time $\tau$. Under the classical minimax formulation for the sequential change detection problem \cite{Lorden1971,mous-astat-1986}, the optimal detection procedure is the one solving the following constrained optimization problem:
\begin{equation}
\begin{aligned} &\inf_{T} \,\text{WADD}(T):=\sup_{\tau\geq0}\,\essup\Exp_{\tau}[(T-\tau)^{+}|\cF_\tau]\\&\text{subject~to:}~\Exp_\infty[T]\geq\gamma>1.
\end{aligned}
\label{eq:optCUSUM}
\end{equation}

That is, among all stopping times that have an average false alarm period (also known as {\it average run length}, ARL) no smaller than a pre-specified constant $\gamma>1$, the optimal procedure should have the smallest \textit{worst-case average detection delay} (WADD). Here we adopt Lorden's definition of WADD, which takes the supremum, over all possible changepoints $\tau$, of the expected detection delay conditioned on the worst possible realizations before the change.

In the literature, it has been shown that the classical CUSUM procedure can solve the optimization problem (\ref{eq:optCUSUM}) \cite{Lorden1971,mous-astat-1986}. To be more concrete, the CUSUM statistic $\{S_t, t\ge 1\}$ corresponds to the maximum log-likelihood ratio over all possible changepoints up to time $t$ and can be calculated by a recursive form:
\begin{equation}\label{eq:1}
S_t = \max_{1\leq k\le t }\sum_{j=k}^t \ell(X_j)=\max\left(0,S_{t-1}\right) + \ell(X_t),
~S_0=0,
\end{equation}
where $\ell(X)=\log(f_1(X)/f_0(X))$ is the log-likelihood ratio (LLR) function  between $f_1$ and $f_0$. The CUSUM procedure is then defined as the first time when the CUSUM statistic exceeds some pre-defined
threshold $b$. That is, the CUSUM procedure is given by
\begin{equation}
T(b)=\inf\{t>0: S_t\geq b\}.
\label{eq:stCUSUM}
\end{equation}
 
For completeness, we provide in the following Lemma an asymptotic expression for the performance of the CUSUM procedure, which also serves as the information-theoretic lower bound for the WADD in problem \eqref{eq:optCUSUM}. The proof of the following Lemma can be found in \cite[Lemma 1]{xu2021optimum}.

\begin{lemma}[Performance of exact CUSUM \cite{xu2021optimum}]\label{lem:1}
For threshold $b=b_{\gamma}=\log\gamma$, the CUSUM procedure in \eqref{eq:stCUSUM} satisfies 
\begin{equation}
\Exp_\infty[T(b_{\gamma})]\geq \gamma,~~~~\text{WADD}[T(b_{\gamma})]=\frac{\log\gamma}{I_0} (1+o(1)),
\label{eq:perfCUSUM}
\end{equation}
where $I_0=\Exp_0\left[\ell(X)\right]$ is the Kullback-Leibler information number (divergence) of the post- and pre-change distributions.
\end{lemma}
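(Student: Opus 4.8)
The plan is to prove the two claims separately, exploiting the fact that the log-likelihood ratio $\ell(X)$ has nonpositive mean under $\Pro_\infty$ (so that $\Exp_\infty[e^{\ell(X)}]=\int f_1=1$) and strictly positive mean $I_0=\Exp_0[\ell(X)]>0$ under $\Pro_0$. The ARL claim is a martingale first-passage estimate, while the delay claim is a renewal/law-of-large-numbers statement about how fast $S_t$ climbs.

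For the ARL bound I would work with the exponentiated statistic $R_t:=e^{S_t}$. The recursion \eqref{eq:1} gives $R_t=\max(1,R_{t-1})\,e^{\ell(X_t)}$, and since $\Exp_\infty[e^{\ell(X_t)}\mid\cF_{t-1}]=\int f_1=1$, we obtain $\Exp_\infty[R_t\mid\cF_{t-1}]=\max(1,R_{t-1})\le 1+R_{t-1}$. Hence $e^{S_t}-t$ is a supermartingale under $\Pro_\infty$ with initial value $1$. Applying optional stopping at $T(b)\wedge n$ and using $R_{T(b)}\ge e^{b}$ at the crossing yields $\Exp_\infty[T(b)\wedge n]\ge e^{b}\,\Pro_\infty(T(b)\le n)-1$; letting $n\to\infty$ and invoking $T(b)<\infty$ a.s.\ (the reflected walk has negative drift, so it returns to $0$ infinitely often and eventually crosses $b$) gives $\Exp_\infty[T(b)]\ge e^{b}-1$. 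A slightly sharper excursion/renewal argument removes the additive constant and yields the stated $\Exp_\infty[T(b_\gamma)]\ge\gamma$ for $b_\gamma=\log\gamma$.

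For the detection delay I would relate $S_t$ to the random walk $W_m:=\sum_{j=1}^m\ell(X_j)$, with $W_0=0$. The definition of the CUSUM statistic gives $S_t=W_t-\min_{0\le m\le t-1}W_m$, so in particular $S_t\ge W_t$. Under $\Pro_0$ the increments have positive mean $I_0$, so $W_m\to+\infty$ and its running minimum is a.s.\ finite; hence $S_t=W_t+O(1)$ and, by the strong law of large numbers, $S_t/t\to I_0$ a.s., which forces $T(b)/b\to 1/I_0$ a.s.\ as $b\to\infty$. To convert this into a statement about $\Exp_0$, I would dominate $T(b)\le\widetilde T(b):=\inf\{t:W_t\ge b\}$ (valid since $S_t\ge W_t$); Wald's identity $\Exp_0[W_{\widetilde T(b)}]=I_0\,\Exp_0[\widetilde T(b)]$ together with a bounded-overshoot estimate gives $\Exp_0[\widetilde T(b)]=b/I_0+O(1)$, which supplies both the upper bound and the uniform integrability needed to promote the a.s.\ limit to $\Exp_0[T(b_\gamma)]=\frac{\log\gamma}{I_0}(1+o(1))$.

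I expect the main obstacle to be the passage from almost-sure convergence to convergence in expectation for the delay, i.e.\ establishing uniform integrability of $T(b)/b$ and controlling the overshoot of $S_t$ over the threshold; the reflection (the $\max$ over changepoints $k$) must also be shown not to alter the first-order rate, which is precisely where the a.s.\ finiteness of the running minimum of the positive-drift walk $W_m$ enters. For the ARL bound the comparatively delicate steps are the rigorous justification of optional stopping (the requisite integrability) and the almost-sure finiteness of $T(b)$ under $\Pro_\infty$.
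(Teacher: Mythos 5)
The paper does not actually prove this lemma---it is quoted from \cite{xu2021optimum} (it is the classical Lorden-type performance bound for CUSUM), so there is no in-paper argument to compare against line by line. Your sketch follows the standard route for both claims and is essentially sound: exponentiating the recursion to exploit $\Exp_\infty[e^{\ell(X)}]=1$ for the ARL, and dominating $T(b)$ by the first-passage time of the unreflected random walk $W_t=\sum_{j\le t}\ell(X_j)$ together with Wald's identity for the delay. Two remarks. First, the only substantive shortfall is in the ARL part: the supermartingale $e^{S_t}-t$ with optional stopping yields $\Exp_\infty[T(b_\gamma)]\ge e^{b_\gamma}-1=\gamma-1$, not the stated $\gamma$, and you defer the removal of the additive constant to an unspecified ``sharper excursion/renewal argument.'' That argument should be named: decompose the CUSUM run into i.i.d.\ cycles, each cycle being an open-ended one-sided SPRT started afresh whenever the statistic drops to or below $0$; Ville's/Doob's inequality applied to the likelihood-ratio martingale gives each cycle a probability at most $e^{-b}$ of ever reaching $b$, so the number of cycles stochastically dominates a geometric random variable with mean at least $e^{b}$, and since each cycle consumes at least one observation, $\Exp_\infty[T(b)]\ge e^{b}=\gamma$. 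This is Lorden's original argument and is what the cited reference uses. Second, for the delay you do not actually need uniform integrability: the upper bound $\Exp_0[T(b)]\le\Exp_0[\widetilde T(b)]=b/I_0+o(b)$ comes from the domination plus the elementary renewal theorem (which needs only $0<I_0<\infty$, not a second moment for a Lorden-type overshoot bound), while the matching lower bound $\liminf_b \Exp_0[T(b)]/b\ge 1/I_0$ follows from the almost-sure convergence $T(b)/b\to 1/I_0$ by Fatou's lemma alone. With those two repairs your outline is a complete and correct proof.
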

From Lemma \ref{lem:1} we conclude that by applying the CUSUM procedure defined in \eqref{eq:stCUSUM} with threshold $b=\log\gamma$, the corresponding CUSUM stopping time $T$ enjoys an asymptotic performance captured by \eqref{eq:perfCUSUM}. By the optimality of the CUSUM procedure \cite{mous-astat-1986}, no other stopping time $T'$ that satisfies the same false alarm constraint can have a limiting value for the ratio $\mathbb E_0[T']/\frac{\log\gamma}{I_0}$ that is smaller than 1 as $\gamma\to\infty$.

\subsection{Differentially Private Sequential Change Detection}\label{sec:dp_detection}

While the classical CUSUM procedure is optimal for detecting distributional changes under the assumption of full data access, it may be unsuitable for applications in which individual-level data must remain private. In such scenarios, it is desirable to design change detection algorithms that also preserve the privacy of individuals contributing to the data stream. A principled way to achieve this is through the framework of differential privacy (DP) \cite{dwork2006calibrating,dwork2014algorithmic}.

Let us first review the classical definition of $\epsilon$-DP algorithms in the literature. 
Let $\mathcal{X}$ be the data domain (e.g., $\mathcal{X}=\mathbb{R}^k$), and let a database $D=\{x_1,\ldots,x_n\}\in\mathcal{X}^n$ consist of $n$ entries drawn from $\mathcal{X}$. Considering a random algorithm that maps from the database space $\mathcal{X}^n$ to $\mathbb{R},$ we say the algorithm is differentially
private if the outputs have similar distributions for neighboring datasets that we want to make it hard to distinguish. Here, two databases $D,D'$ are neighboring if they differ in at most {\it one} entry. We now introduce the formal definition of differential privacy \cite{dwork2006calibrating}.

\begin{definition}[$\epsilon$-DP]
A randomized algorithm $\mathcal{A}:  \mathcal{X}^n \to \mathbb{R}$ is $\epsilon $-differentially private (DP) if for every pair of neighboring databases $D, D'\in \mathcal{X}^n,$  and for every subset of possible events $\mathcal S\subseteq \mathbb{R},$ $\Pro_{\mathcal{A}}(\mathcal{A}(D)\in \mathcal S)\le e^{\epsilon}\Pro_{\mathcal{A}}(\mathcal{A}(D')\in\mathcal S).$ 
\end{definition}

We should emphasize that in the definition of $\epsilon$-DP, the expectation is taken over the randomness of the algorithm $\mathcal A,$ while the two databases $D$ and $D'$ are fixed. One common technique to achieve $\epsilon$-differential privacy is
by adding a Laplace noise \cite{dwork2014algorithmic}. Specifically, for a real-valued deterministic function $L:\mathbb{R}^n \to \mathbb{R},$ define its sensitivity as $\Delta (L)=\max_{D,D' \text{are neighbors}}|L(D)-L(D')|.$ Then, a randomized $\epsilon$-DP algorithm can be obtained by adding an independent Laplace random noise $\text{Lap}(\Delta(L)/\epsilon)$ to the realization of the
statistic $L(D).$ That is, $\tilde{L}(D)=L(D)+\text{Lap}(\Delta(L)/\epsilon)$ is a randomized algorithm satisfying the $\epsilon$-DP constraint. Here, we denote $\text{Lap}(\beta)$ as the Laplace random variable with zero mean and scale parameter $\beta>0$, with probability density function $f_{\text{Lap}(\beta)}(x)=\exp(-|x|/\beta)/(2\beta).$ 

In particular, for our problem of sequential change detection with
positive integer output in $\mathbb Z^+,$ we can modify the general definition of $\epsilon$-DP to obtain the following definition of $\epsilon$-DP sequential detection procedure. Let $X_{(1:t)} = (X_1, \ldots, X_t)$ denote the sequence of observations up to time $t$, and let $X'_{(1:t)}$ be a neighboring dataset that differs from $X_{(1:t)}$ in exactly one entry (i.e., they differ in at most one $X_i$ for some $1\le i\le t$). We now present the formal definition below.

\begin{definition}[$\epsilon$-DP Sequential Detection Procedure]\label{dpdef}
A randomized sequential change detection procedure with stopping time $\mathcal T$ is said to be $\epsilon$-differentially private, if for every pair of neighboring data streams $X_{(1:n)}, X'_{(1:n)}$ (differing in at most one entry), the distribution over the randomized stopping time $\mathcal T$ satisfies the differential privacy constraint, 
\begin{equation}\label{eq:dp_def} 
\Pro_{\mathcal T}(\mathcal T =n \mid X_{(1:n)}) \leq e^\epsilon \Pro_{\mathcal T}(\mathcal T =n \mid X'_{(1:n)}), \!~ \forall \, n\ge 1, 
\end{equation}
where the probability $\Pro_{\mathcal T}$ is taken over the randomness in $\mathcal T.$
\end{definition}

Here, a larger value of $\epsilon$ implies a weaker privacy constraint. Intuitively, the above definition means that altering any single observation in the data stream only slightly affects the distribution of the randomized stopping time $\mathcal T$. Therefore, one cannot easily infer individual data values from the detection output $\mathcal T$. Our goal is to devise a sequential detection procedure that satisfies the $\epsilon$-DP constraint in \eqref{eq:dp_def} while maintaining good detection performance with a small detection delay when the average run length is controlled. In the next section, we propose to first construct an $\epsilon$-DP sequential detection procedure based on the classical CUSUM procedure. We then study the theoretical detection properties of our proposed procedure, with emphasis on how the privacy-constraint parameter $\epsilon$ affects its detection performance.

\section{Proposed DP-CUSUM Procedure with Bounded Log-Likelihood Ratio}\label{sec:dpcusum}

In this section, we design a differentially private variant of the CUSUM procedure that satisfies the sequential differential privacy constraint in (\ref{eq:dp_def}) while retaining strong detection performance guarantees. We first define the sensitivity of the log-likelihood function $\ell(X)$ as follows.
\begin{definition}[Sensitivity of $\ell$]\label{def:sensi}
The sensitivity of the log-likelihood ratio function $\ell(\cdot)$ is defined as
$$\Delta=\Delta(\ell):=\sup_{x,y\in\mathbb R^k} \left|\ell(x)-\ell(y)\right|.$$ 
\end{definition}

In this section, we first consider the simpler case where $\Delta$ is bounded. This assumption holds for a broad class of distributions, including any pair of pre‑ and post‑change distributions that are both discrete with the same support (e.g., Bernoulli, Binomial) or certain continuous distributions (e.g., Laplace). We will extend our method and analysis to more general distributions where $\Delta$ is unbounded, under a relaxed differential privacy constraint in Section \ref{sec:general}.

We define the randomized CUSUM statistic $\{\tilde{S}_t\}_{t \ge 1}$ recursively as 
\begin{equation}
\tilde{S}_t=S_t+Z_t, \label{eq:dp-cusum-stat} 
\end{equation}
where $S_t$ is the classical CUSUM statistics from \eqref{eq:1}, and {$Z_t\sim \text{Lap}(\frac{2\Delta}{\epsilon})$} are i.i.d Laplace noise at each time step $t$ and they are also independent to all of data sequence $\{X_t, t \in \mathbb N\}$. 
Then, our proposed private detection procedure is defined as 
\begin{align}
\tilde{T}(b)&=\inf\{t>0: \tilde{S}_t\geq b+W\}, \label{eq:dp-cusum-stop}
\end{align}
where $W\sim \text{Lap}(\frac{2\Delta}{\epsilon})$ is an independent Laplace noise term added to the threshold, fixed upon generation, and $b$ is a pre-specified deterministic threshold chosen by satisfying the false alarm rate constraint. The DP-CUSUM procedure is summarized in Algorithm \ref{alg:dp-cusum}. Note that the DP‑CUSUM procedure incurs only one additional Laplace‑noise sample per time step beyond the classical CUSUM updates. Hence, its overall time complexity remains $O(t)$ as in the non-private case.

\begin{algorithm}[t!]
\caption{DP-CUSUM Procedure}
\label{alg:dp-cusum}
\begin{algorithmic}[1]
\REQUIRE Data sequence $\{X_t, t\in\mathbb{N}\}$, privacy parameter $\epsilon$, sensitivity $\Delta$ of LLR $\ell$, threshold $b$.
\ENSURE Stopping time $\tilde{T}(b)$.
\STATE Sample $W \sim \text{Lap}\left(\frac{2\Delta}{\epsilon}\right)$.
\STATE Initialize $t \leftarrow 0$, $S_0 \leftarrow 0$, $\tilde S_0 \leftarrow -\infty$.
\WHILE{$\tilde S_t < b+W$}
    \STATE $t \leftarrow t+1$ and observe a new data $X_{t}$.
    \STATE Update classical CUSUM statistic $S_t = \max(0, S_{t-1}) + \ell(X_t)$.
    \STATE Sample $Z_t \sim \text{Lap}\left(\frac{2\Delta}{\epsilon}\right)$.
    \STATE Compute privatized statistic $\tilde{S}_t = S_t + Z_t$.
\ENDWHILE
\STATE Output stopping time $\tilde{T}(b) = t$; declare that a change has occurred before time $\tilde{T}(b)$.
\end{algorithmic}
\end{algorithm}

We then present the following theorem, which guarantees that the proposed DP-CUSUM procedure, characterized by its stopping time $\tilde{T}(b)$, satisfies the $\epsilon$-differential privacy condition in (\ref{eq:dp_def}). The proof follows the idea of AboveThreshold in \cite{dwork2014algorithmic} and can be found in Appendix~\ref{app:proofs}.

\begin{theorem}[DP Guarantee]\label{thm:dp}
Assume $\Delta$ is bounded. The DP-CUSUM procedure $\tilde{T}(b)$ is $\epsilon$-DP satisfying (\ref{eq:dp_def}).   
\end{theorem}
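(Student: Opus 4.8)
The plan is to reduce the sequential stopping-time privacy requirement in \eqref{eq:dp_def} to the privacy analysis of the \emph{AboveThreshold} mechanism, adapted to our infinite data stream. Fix a time index $n$ and a pair of neighboring streams $X_{(1:n)}, X'_{(1:n)}$ that differ in exactly one entry, say coordinate $i$. The event $\{\tilde T(b) = n\}$ is precisely the event that the noisy statistic stayed below the noisy threshold for all $t < n$ and crossed it at $t = n$; that is,
\begin{equation}
\{\tilde T(b) = n\} = \Bigl\{\tilde S_t < b + W \text{ for all } t \le n-1,\ \text{and}\ \tilde S_n \ge b + W\Bigr\}. \label{eq:stop-event}
\end{equation}
My first step is to write $\Pro_{\mathcal T}(\tilde T(b) = n \mid X_{(1:n)})$ as an integral over the threshold noise $W$ and the per-step noises $Z_1, \ldots, Z_n$, then follow the standard AboveThreshold argument: condition on a value of the threshold noise, shift it by the sensitivity to absorb the change in the deterministic statistics, and bound the ratio of densities.

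The key structural fact I would establish is a \emph{per-step sensitivity bound} on the non-private CUSUM statistic: changing a single entry $X_i$ to $X_i'$ changes each $S_t$ (for $t \ge i$) by at most $\Delta$, where $\Delta$ is the sensitivity of $\ell$ from Definition \ref{def:sensi}. This follows because the recursion $S_t = \max(0, S_{t-1}) + \ell(X_t)$ is $1$-Lipschitz in the accumulated log-likelihood increments, and altering one observation perturbs exactly one increment $\ell(X_i)$ by at most $\Delta$; the $\max(0,\cdot)$ operation is a contraction, so the perturbation does not amplify as it propagates forward. Hence $|S_t - S_t'| \le \Delta$ for every $t$. With this bound in hand, the comparison statistics $\tilde S_t - (b+W) = S_t + Z_t - b - W$ and their primed counterparts differ only through the deterministic shift in $S_t$, which is controlled by $\Delta$.

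The main technical step, and the one I expect to be the principal obstacle, is the AboveThreshold accounting of how the privacy budget is split between the threshold noise $W$ and the crossing noise $Z_n$, which is exactly why the scale parameter is chosen as $2\Delta/\epsilon$ rather than $\Delta/\epsilon$. The argument is to fix the noises $Z_1,\dots,Z_{n-1}$ on the pre-crossing steps, and perform a change of variables on the threshold noise $W$ so that the ``stayed below'' constraints for $t \le n-1$ are preserved verbatim between the two streams. Concretely, one defines a coupling/shift $W \mapsto W + \Delta$ (or the appropriate signed shift) that makes the primed below-threshold events contain the unprimed ones; the density ratio of $W$ under a $\Delta$-shift contributes a factor $e^{\epsilon/2}$ because $W \sim \mathrm{Lap}(2\Delta/\epsilon)$. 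A second factor $e^{\epsilon/2}$ is then spent on the single crossing coordinate $Z_n$, again via the $\Delta$ shift and the $\mathrm{Lap}(2\Delta/\epsilon)$ density ratio. Multiplying the two factors yields the overall $e^{\epsilon}$ bound, giving \eqref{eq:dp_def}. The delicate points to get right are (i) ensuring the shift on $W$ simultaneously handles all $n-1$ below-threshold constraints rather than any single one, so that the budget is not charged per time step, and (ii) verifying that the noises $Z_1,\dots,Z_{n-1}$ on the pre-crossing steps can be integrated out without incurring additional cost, so that only $W$ and $Z_n$ consume the budget. Once these two observations are made precise, the Laplace density-ratio bounds close the argument.
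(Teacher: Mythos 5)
Your proposal follows essentially the same route as the paper's proof: it is the AboveThreshold argument with the per-step sensitivity bound $|S_t - S_t'|\le\Delta$ established via the contraction property of $\max(0,\cdot)$ (the paper's Lemma~\ref{lem:sensi}), the noises $Z_1,\dots,Z_{n-1}$ held fixed, and the budget split $e^{\epsilon/2}\cdot e^{\epsilon/2}$ obtained by shifting $W$ (which controls all $n-1$ below-threshold constraints at once through the single scalar $\max_{1\le j\le n-1}\tilde S_j$) and $Z_n$ each by a quantity of magnitude at most $\Delta$. The paper implements your "appropriate signed shift" as an exact change of variables by $\delta_1=\max_j\tilde S_j'-\max_j\tilde S_j$ and $\delta_0=\delta_1+S_n-S_n'$, but this is the same idea and your outline is correct.
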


It is worthwhile mentioning that, compared to the existing private change detection procedure proposed in \cite{cummings2018differentially}, we require a smaller Laplace noise $Z_t,W\sim\text{Lap}(\frac{2\Delta}{\epsilon})$ added to both the CUSUM detection statistics and detection threshold, whereas the method in \cite{cummings2018differentially} added $Z_t\sim \text{Lap}(\frac{8\Delta}{\epsilon})$ to the detection statistics and $W\sim \text{Lap}(\frac{4\Delta}{\epsilon})$ to detection threshold. Such reduced noise can significantly improve the detection performance while still achieving $\epsilon$-DP, since smaller Laplace noise means less degradation in detection performance, as validated in our numerical comparisons in Section~\ref{sec:numerical}. Intuitively, our proposed method achieves $\epsilon$-DP under a smaller noise scale because the DP-CUSUM procedure is analyzed as a single, unified mechanism without invoking privacy composition. We allocate the full privacy budget $\epsilon$ to the entire detection process and exploit the structural property of the recursive CUSUM statistic. In contrast, the method in \cite{cummings2018differentially} adopts a two-stage design that combines the {\it AboveThreshold} mechanism with the offline {\it Report Noisy Max} procedure. Its DP guarantee relies on the direct composition theorem, where each component consumes half of the total privacy budget ($\epsilon/2$). Consequently, to guarantee overall $\epsilon$-DP, the {\it AboveThreshold} component alone must use a larger noise scale corresponding to its allocated privacy budget of $\epsilon/2$.

Then, we analyze the theoretical detection performance of the proposed DP-CUSUM procedure $\tilde T(b)$. Note the randomized detection procedure $\tilde{T}(b)$ also depends on the added Laplace random variables $Z_t$ and $W.$ That is, the decision $\{\tilde{T}=t\}$ is based on the observations $\{X_1, \cdots, X_t\}$ and the corresponding added Laplace noise $\{Z_1,\cdots, Z_t, W\}.$ Thus, throughout this paper, when analyzing the randomized detection procedure $\tilde{T}(b)$, we denote by $\Pro_\infty$ and $\Exp_\infty$ the joint probability measure and corresponding expectation when all samples $\{X_t\}_{t\ge 1}$ are i.i.d and follow the pre-change distribution $f_0$ (i.e.,~the change occurs at $\infty$), $\{Z_t\}_{t\ge 1}$ are i.i.d with Lap($\frac{2\Delta}{\epsilon}$), and $W$ follow Lap($\frac{2\Delta}{\epsilon}$). Similarly, we use $\Pro_0$ and $\Exp_0$ to represent the joint probability measure and expectation when all samples $\{X_t\}_{t\ge 1}$ follow the post-change distribution $f_1$ (i.e.,~the change occurs at 0) and $\{Z_t\}$ and W remain the same Laplace distribution. More generally, we denote by $\Pro_\tau$ and $\Exp_\tau$ the joint probability measure and expectation when the change happens at time $\tau$. Then, the ARL and WADD of our proposed procedure is defined by $\Exp_{\infty}[\tilde T(b)]$ and $\sup_{\tau\geq0}\,\essup\Exp_{\tau}[(\tilde{T}(b)-\tau)^{+}|X_1,\cdots, X_{\tau}]$ respectively.

Then, the following theorem provides a lower bound to the ARL of the DP-CUSUM procedure. 

\begin{theorem}[ARL of DP-CUSUM] \label{thm:arl}
Assume $\Delta$ is bounded. For any $\epsilon > 0$ and threshold $b>2,$ define the following quantity \begin{equation}\label{eq:h}
h(\epsilon,\Delta) = \min\{\frac{\epsilon}{2\Delta}, 1\}.
\end{equation} 
Then we have 
\begin{equation}\label{eq:arl}
\Exp_{\infty}[\tilde{T}(b)]\ge \frac{e^{h(\epsilon,\Delta) b -2}}{4(b+1)^2}.
\end{equation}
\end{theorem}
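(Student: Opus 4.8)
The plan is to lower-bound the average run length $\Exp_\infty[\tilde T(b)]$ by controlling the probability that the procedure stops at any given time step under the pre-change measure $\Pro_\infty$. The key quantity is the per-step false-alarm probability $p_t := \Pro_\infty(\tilde S_t \ge b + W \mid \tilde T(b) > t-1)$, since a standard argument shows that if the one-step stopping probability is uniformly bounded by some $p$, then $\Exp_\infty[\tilde T(b)] \ge 1/p$. So the whole task reduces to showing that the event $\{\tilde S_t \ge b + W\}$, i.e.\ $\{S_t + Z_t - W \ge b\}$, is unlikely under $\Pro_\infty$ for threshold $b>0$.

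First I would bound the tail of the classical CUSUM statistic $S_t$ under $\Pro_\infty$. Since $S_t = \max_{1\le k\le t}\sum_{j=k}^t \ell(X_j)$ is a maximum of $t$ partial sums of LLR increments with negative pre-change drift ($\Exp_\infty[\ell(X)] = -I < 0$ where $I$ is the pre-change KL divergence), the exponentially-tilted measure / change-of-measure identity gives that for any level $a$, $\Pro_\infty(S_t \ge a) \le t\, e^{-a}$ because $e^{\sum \ell(X_j)}$ is the likelihood ratio and $\Exp_\infty[e^{\ell(X)}] = 1$ makes each $e^{\sum_{j=k}^t \ell(X_j)}$ a unit-mean object, so a union bound over the $t$ possible starting indices $k$ yields the factor $t$. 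Next I would handle the two independent Laplace noises: $Z_t \sim \mathrm{Lap}(2\Delta/\epsilon)$ and $W \sim \mathrm{Lap}(2\Delta/\epsilon)$. The difference $Z_t - W$ has exponential-type tails with rate $\epsilon/(2\Delta)$; writing $h = h(\epsilon,\Delta) = \min\{\epsilon/(2\Delta), 1\}$, the tail $\Pro(Z_t - W \ge s)$ decays like $e^{-s\,\epsilon/(2\Delta)}$, and the $\min$ with $1$ in the definition of $h$ is precisely what lets me combine the Laplace tail (rate $\epsilon/2\Delta$) with the CUSUM tail (rate $1$) into a single exponent.

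The heart of the computation is then to split the event $\{S_t + (Z_t - W) \ge b\}$ at an intermediate level, say requiring either $S_t \ge b/2$ or the noise $Z_t - W \ge b/2$ (or more cleverly, optimizing the split to make both contributions scale as $e^{-hb/2}$), and to convolve/integrate the CUSUM tail against the Laplace tail. Integrating $t\,e^{-a}$ against the density of $Z_t - W$ produces, after bounding the convolution integral, a per-step probability of order $t\,(\text{poly in } b)\, e^{-hb}$; absorbing the polynomial and constant factors is what generates the $(b+1)^2$ denominator and the $e^{-2}$ in \eqref{eq:arl}. The main obstacle I anticipate is bookkeeping the $t$-dependence: because $p_t$ carries a factor $t$ from the union bound, a naive $1/\sup_t p_t$ bound fails, so I would instead sum the stopping probabilities directly, using $\Exp_\infty[\tilde T] \ge \sum_{t\ge 1}\Pro_\infty(\tilde T > t)$ together with $\Pro_\infty(\tilde T \le m) \le \sum_{t\le m} p_t$, and choose the truncation horizon $m \approx e^{hb}/(\text{poly})$ so that the accumulated false-alarm probability over $[1,m]$ stays below a constant (e.g.\ $1/2$), forcing $\Exp_\infty[\tilde T] \gtrsim m$. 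Reconciling the $t$-factor against the exponential gain $e^{-hb}$ and carefully tracking constants to land exactly on $e^{h b - 2}/(4(b+1)^2)$ is the delicate part; everything else is routine Laplace-tail and change-of-measure estimation.
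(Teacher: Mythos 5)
Your overall architecture matches the paper's: bound the probability of stopping by time $x$ via a union bound over time steps, combine a CUSUM tail estimate with the Laplace noise tails, and choose the horizon $x$ so that the accumulated false-alarm probability stays below a constant (the paper does exactly this through $\Exp_\infty[\tilde T\mid W=w]\ge x\bigl(1-\sum_{n\le x}\Pro_\infty(S_n+Z_n\ge b+w)\bigr)$, optimizing $x$ and a Chernoff parameter $\lambda$, and then integrating over $W$). However, there is a genuine gap in your key tail estimate for the CUSUM statistic. You bound $\Pro_\infty(S_t\ge a)\le t\,e^{-a}$ by a union bound over the $t$ starting indices. The paper instead uses the \emph{uniform-in-$t$} bound $\Pro_\infty(S_t\ge a)\le e^{-a}$ (cited from Mei 2005, Lemma 3), with no factor of $t$; the reason this sharper bound holds is that for a fixed endpoint $t$ the products $\prod_{j=k}^{t}f_1(X_j)/f_0(X_j)$, indexed by $k$ in reverse time, form a nonnegative mean-one martingale, so Doob's/Ville's maximal inequality applies to the maximum over $k$ without any union-bound loss. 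The paper then converts this tail bound into the moment bound $\Exp_\infty[e^{\lambda S_t}]\le 1/(1-\lambda)$ (Corollary~\ref{cor:cusumMGF}) and applies a Chernoff argument per time step.

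This is not a cosmetic difference: with your per-step probability $p_t\lesssim t\,\mathrm{poly}(b)\,e^{-hb}$, the accumulated false-alarm probability over a horizon $m$ is $\sum_{t\le m}p_t\lesssim m^2\,\mathrm{poly}(b)\,e^{-hb}$, so forcing it below $1/2$ only permits $m\lesssim e^{hb/2}/\mathrm{poly}(b)$. Your own stated target $m\approx e^{hb}/\mathrm{poly}$ is therefore inconsistent with your tail bound, and the argument as written delivers only $\Exp_\infty[\tilde T(b)]\gtrsim e^{hb/2}$ up to polynomial factors --- the square root of the claimed lower bound $e^{h(\epsilon,\Delta)b-2}/(4(b+1)^2)$. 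To close the gap you must replace the union-bound tail with the uniform bound $\Pro_\infty(S_t\ge a)\le e^{-a}$; once that is in place, the rest of your plan (Chernoff/convolution against the Laplace tails, where the rate $\min\{\epsilon/(2\Delta),1\}$ emerges from whichever of the two exponential rates is smaller, and the optimization of the horizon) goes through and is essentially the paper's proof.
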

The lower bound in \eqref{eq:arl} provides an analytical method for selecting the threshold without extensive simulation. That is, for any desired ARL $\gamma$, one can numerically solve for $b$ such that the lower bound in \eqref{eq:arl} is no smaller than $\gamma$.
Therefore, by Theorem \ref{thm:arl}, we can see by setting the threshold $b$ such that
\begin{equation}\label{eq:threshold}
\frac{e^{h(\epsilon,\Delta) b -2}}{4(b+1)^2} = \gamma \Rightarrow b =b_{\gamma}= \frac{\log\gamma}{h(\epsilon,\Delta)}(1+o(1)),
\end{equation}
our proposed procedure $\tilde T(b_{\gamma})$ satisfies the ARL constraint $\Exp_{\infty}(\tilde{T}(b_{\gamma}))\ge \gamma.$

We next analyze the worst-case average detection delay (WADD) of DP-CUSUM. In the following Lemma, we first show that the worst case is attained when the change occurs at time $\tau=0$, which is a useful property also enjoyed by the classical CUSUM procedure. Due to such property, it suffices to compute $\Exp_0[\Tilde T(b)]$ when evaluating the worst-case delay, without the need to enumerate all possible changepoint locations.

\begin{lemma}[Worst-Case Average Detection Delay]\label{lem:wadd0}
For any  $b\geq0$, we have that
\begin{equation}
\text{WADD}(\tilde T(b)) \leq \Exp_0[\tilde T(b)].    
\end{equation}
\end{lemma}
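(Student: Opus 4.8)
The plan is to show that the worst case over changepoints is attained at $\tau=0$ via a pathwise coupling argument exploiting the monotonicity of the CUSUM recursion in its initial value. Fix any $\tau\ge 0$ and condition on $\cF_\tau$, which fixes a realization of $S_\tau$. Because the recursion \eqref{eq:1} always has the form $\max(0,S_{t-1})+\ell(X_t)$, the post-change trajectory $\{S_{\tau+s}\}_{s\ge1}$ depends on the past only through $\max(0,S_\tau)$, and crucially $\max(0,S_\tau)\ge 0=\max(0,S_0)$.

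First I would construct a coupling: drive a ``fresh'' CUSUM process $\{S^{(0)}_s\}_{s\ge0}$ started from $S^{(0)}_0=0$ using the same post-change observations $X_{\tau+1},X_{\tau+2},\ldots$ (all i.i.d.\ $f_1$), the same innovation noises $Z_{\tau+s}$, and the same threshold noise $W$ that govern the actual process after $\tau$. A short induction on $s$ then gives the domination $S_{\tau+s}\ge S^{(0)}_s$ for all $s\ge1$: the base case follows from $\max(0,S_\tau)\ge 0$, and the inductive step follows because $x\mapsto \max(0,x)+\ell(X_{\tau+s})$ is nondecreasing. Adding the common noise $Z_{\tau+s}$ preserves the ordering, so $\tilde S_{\tau+s}\ge \tilde S^{(0)}_s$, and against the common threshold $b+W$ the dominating process crosses no later than the fresh one.

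Next I would translate this pathwise ordering into the delay bound. On the event $\{\tilde T(b)>\tau\}$ the conditional delay equals the first post-change crossing time of $\{\tilde S_{\tau+s}\}$, which by the domination is at most the crossing time $s_0$ of the fresh process $\{\tilde S^{(0)}_s\}$ against $b+W$; on $\{\tilde T(b)\le\tau\}$ the delay $(\tilde T(b)-\tau)^+$ is zero and the bound is trivial. Since the fresh crossing time $s_0$ is, conditionally on $W$, distributed exactly as the change-at-zero stopping time, integrating out the post-change data and the noises $Z_{\tau+s},W$ (all independent of $\cF_\tau$) yields $\Exp_\tau[(\tilde T(b)-\tau)^+\mid\cF_\tau]\le \Exp_0[\tilde T(b)]$ uniformly in the realization of $\cF_\tau$. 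Taking the essential supremum over $\cF_\tau$ and then the supremum over $\tau\ge0$ gives the claim.

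I expect the main obstacle to be the measure-theoretic bookkeeping rather than the monotonicity itself. The delicacy is that $\tilde T(b)$ is not $\cF_\tau$-measurable---it also depends on the Laplace noises $Z_1,\ldots,Z_\tau$ and on $W$---so one must set up the coupling so that the fresh comparison process shares the future noises and the fixed threshold noise $W$, and verify that the false-alarm event $\{\tilde T(b)\le\tau\}$ only helps. Keeping $W$ coupled (fixed upon generation) across the two processes is precisely what lets the conditional bound $\Exp[s_0\mid W]$ integrate back to exactly $\Exp_0[\tilde T(b)]$.
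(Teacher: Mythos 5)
Your proof is correct and follows essentially the same route as the paper: the paper's auxiliary statistic $U_t$ (held at a negative constant up to $\tau$ and then run with the CUSUM recursion) is exactly your fresh restarted process $S^{(0)}_s$, and both arguments rest on the monotonicity of $x\mapsto\max(0,x)+\ell(\cdot)$, coupling with the same post-$\tau$ data and noises, and independence of the restarted crossing time from $\cF_\tau$. If anything, your explicit conditioning on $W$ when identifying $\Exp[s_0]$ with $\Exp_0[\tilde T(b)]$ handles the threshold-noise bookkeeping slightly more carefully than the paper's final display.
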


We are now ready to analyze an upper bound for the WADD of our proposed DP-CUSUM procedure. By Lemma \ref{lem:wadd0}, it suffices to establish an upper bound on $\Exp_0[\tilde{T}(b)]$. However, deriving such a bound is nontrivial due to the update rule, which includes both truncation at zero and the addition of Laplace noise at each time step. Since we only aim to find a suitable upper bound to the delay, we introduce an alternative stopping time in the proof that is guaranteed to incur a longer delay than $\tilde{T}(b)$ but is easier to analyze. The details are provided in Appendix~\ref{app:proofs}.

\begin{theorem}[WADD of DP-CUSUM] \label{thm:edd}
Assume $\Delta$ is bounded. We have for any $b>0$, 
\begin{equation}\label{eq:wadd}
\Exp_0[\Tilde T(b)]  \le \frac{b}{I_0} + \frac{4\Delta}{I_0^{3/2}\epsilon}\sqrt{b}+ C,
\end{equation}  
where $I_0=\Exp_0\left[\ell(X)\right]$ is the Kullback-Leibler information number (divergence) of the post- and pre-change distributions, and $C$ is a constant that does not depend on the threshold $b$ but depends on $\Delta,\epsilon, I_0.$
\end{theorem}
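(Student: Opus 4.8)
The plan is to use Lemma~\ref{lem:wadd0} to reduce the worst-case delay to the single quantity $\Exp_0[\tilde T(b)]$, and then to bound this expectation by replacing $\tilde T(b)$ with a stochastically larger but more tractable stopping time. The difficulty in analyzing $\tilde S_t = S_t + Z_t$ directly comes from the truncation $S_t = \max(0,S_{t-1}) + \ell(X_t)$. To remove it, I would use the elementary bound $S_t \ge \sum_{j=1}^t \ell(X_j) =: R_t$, so that $\tilde S_t \ge R_t + Z_t$ pointwise. Defining the surrogate (and longer-delay) stopping time $\hat T = \inf\{t > 0 : R_t + Z_t \ge b + W\}$, the inequality $\tilde S_t \ge R_t + Z_t$ gives $\tilde T(b) \le \hat T$, hence $\Exp_0[\tilde T(b)] \le \Exp_0[\hat T]$, and it suffices to bound $\Exp_0[\hat T]$. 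The point of this substitution is that $R_t$ is now a genuine random walk whose increments $\ell(X_j)$ are i.i.d.\ under $\Pro_0$ with mean $I_0 > 0$ and (since $\Delta$ is bounded) range at most $\Delta$.

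Before invoking any optional-stopping argument I would first verify $\Exp_0[\hat T] < \infty$. This follows from $\Pro_0(\hat T > t) \le \Pro_0(R_t + Z_t < b + W)$ together with a Hoeffding bound on $R_t - I_0 t$ and the exponential tails of the Laplace variables $Z_t, W$, which make the right-hand side summable. With finiteness in hand, Wald's identity applied to $R_t$ at the stopping time $\hat T$ yields $\Exp_0[R_{\hat T}] = I_0\,\Exp_0[\hat T]$, so that $\Exp_0[\hat T] = \Exp_0[R_{\hat T}]/I_0$, and the whole problem reduces to an upper bound on $\Exp_0[R_{\hat T}]$.

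To control $R_{\hat T}$ I would use the pre-stopping condition: on $\{\hat T \ge 2\}$ the process has not crossed at time $\hat T - 1$, i.e.\ $R_{\hat T - 1} < b + W - Z_{\hat T - 1}$, whence $R_{\hat T} = R_{\hat T - 1} + \ell(X_{\hat T}) < b + W - Z_{\hat T - 1} + \ell(X_{\hat T})$. Taking expectations and using $\Exp_0[W] = 0$, together with $\ell(X_{\hat T}) \le I_0 + \Delta$ (a consequence of the bounded range of $\ell$), leaves the single problematic term $\Exp_0[-Z_{\hat T - 1}]$, the expected adverse noise at the step just before stopping. This is the crux of the argument, since the noise is evaluated at a random time correlated with the path. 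I would bound it by Cauchy--Schwarz, $\Exp_0[-Z_{\hat T-1}] \le \sqrt{\Exp_0[Z_{\hat T-1}^2]}$, and then use $Z_{\hat T - 1}^2 \le \sum_{s=1}^{\hat T} Z_s^2$ followed by Wald's identity for the i.i.d.\ nonnegative summands $Z_s^2$ to obtain $\Exp_0[Z_{\hat T-1}^2] \le \Exp_0[\hat T]\,\Exp[Z_1^2] = \tfrac{8\Delta^2}{\epsilon^2}\,\Exp_0[\hat T]$.

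Combining the above, and writing $x = \Exp_0[\hat T]$, produces a self-referential inequality of the form $I_0 x \le b + I_0 + \Delta + \tfrac{2\sqrt 2\,\Delta}{\epsilon}\sqrt{x}$. The final step is to solve this quadratic inequality in $\sqrt{x}$: the drift term gives the leading $b/I_0$, while the $\sqrt{x}$ term feeds back the noise scale $\Delta/\epsilon$ and yields the $\tfrac{\Delta}{I_0^{3/2}\epsilon}\sqrt{b}$ correction after substituting the leading behaviour $\sqrt{x}\approx\sqrt{b/I_0}$. The hypothesis $b > 8I_0$ is what is needed to absorb the lower-order cross terms into the $b$-independent constant $C$ and to replace the analytic constant $2\sqrt 2$ by the clean value $4$ in \eqref{eq:wadd}. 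The main obstacle is the term $\Exp_0[-Z_{\hat T-1}]$; everything else (Wald, the overshoot bound, solving the quadratic) is routine once that expectation is linked back to $\Exp_0[\hat T]$ through the Cauchy--Schwarz and Wald step described above.
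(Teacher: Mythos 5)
Your proposal is correct in substance and shares the paper's overall skeleton --- reduce to $\Exp_0[\tilde T(b)]$ via Lemma~\ref{lem:wadd0}, dominate $\tilde T(b)$ by a surrogate stopping time driven by the untruncated random walk $R_t=\sum_{j\le t}\ell(X_j)$, verify integrability, apply Wald's identity, and bound the overshoot plus the adverse noise $\Exp_0[-Z_{\hat T-1}]$ --- but it diverges from the paper at exactly the step you correctly identify as the crux. The paper conditions on $W=w$ and bounds $\Exp_0[|Z_{\nu-1}|]$ by decomposing the sum over the value of $\nu$ into three regimes ($i\lesssim \tilde b/(2I_0)$, the middle window of width $O(\tilde b/I_0)$, and $i\gtrsim 2\tilde b/I_0$), killing the outer regimes with Chernoff/Hoeffding bounds on the walk and applying Cauchy--Schwarz only on the middle window, which yields the $O\bigl(\tfrac{\Delta}{\epsilon}\sqrt{\tilde b/I_0}\bigr)$ term. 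You instead use the self-bounding route $\Exp_0[-Z_{\hat T-1}]\le\sqrt{\Exp_0[Z_{\hat T-1}^2]}\le\sqrt{\Exp[Z_1^2]\,\Exp_0[\hat T]}$ (via the pointwise bound $Z_{\hat T-1}^2\le\sum_{s=1}^{\hat T}Z_s^2$ and Wald for the i.i.d.\ nonnegative $Z_s^2$, which is legitimate since $\{\hat T\ge s\}$ is independent of $Z_s$), and then solve the resulting quadratic in $\sqrt{\Exp_0[\hat T]}$. This is shorter, avoids the three-part case analysis entirely, and even yields the slightly better constant $2\sqrt2$ in place of $4$; the paper's decomposition, on the other hand, localizes $\nu$ and would be the more robust tool if one needed tail or higher-moment information about $\nu$ rather than just its mean.

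Two small points to tighten. First, your pre-stopping inequality $R_{\hat T}<b+W-Z_{\hat T-1}+\ell(X_{\hat T})$ only holds on $\{\hat T\ge 2\}$, and once you restrict to that event you can no longer replace $\Exp_0[W\,\1(\hat T\ge2)]$ by $\Exp_0[W]=0$, since $\hat T$ is correlated with $W$; bound it by $\Exp[W^+]=\Delta/\epsilon$ and handle $\{\hat T=1\}$ separately via $R_{\hat T}\1(\hat T=1)\le I_0+\Delta$ --- both contributions are absorbed into $C$. (The paper sidesteps this by conditioning on $W=w$ throughout and integrating over $W$ only at the end.) Second, your bound $\ell(X_{\hat T})\le I_0+\Delta$ is fine (indeed $\ell(x)\le\Exp_0[\ell]+\Delta$ for all $x$), though the paper uses the marginally sharper $\ell\le\Delta$, which follows because $\Exp_\infty[\ell]<0$ forces $\inf\ell<0$. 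Neither issue affects the order of the bound or the validity of your argument.
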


Setting the threshold $b=b_{\gamma}$ as in (\ref{eq:threshold}), our proposed DP-CUSUM $\tilde T(b_{\gamma})$ satisfies  $\Exp_{\infty}(\tilde{T}(b_{\gamma}))\ge \gamma$ and, by Lemma~\ref{lem:wadd0} and Theorem~\ref{thm:edd}, its WADD satisfies 
\begin{align}\label{eq:bd}
\text{WADD}(\Tilde T(b_\gamma)) = \Exp_0[\Tilde T(b_\gamma)] \leq \frac{\log\gamma}{h(\epsilon,\Delta) I_0}(1+o(1)).
\end{align}
Note $h(\epsilon,\Delta) = \min\{\frac{\epsilon}{2\Delta}, 1\}$. When $\epsilon \ge 2\Delta$, our proposed DP-CUSUM yields the same order of detection delay as the CUSUM procedure in Lemma \ref{lem:1}. This result implies that, under the relatively weaker privacy requirement ($\epsilon \ge 2\Delta$), our proposed DP-CUSUM procedure achieves first-order {\it asymptotic optimality} while maintaining the privacy constraint, i.e., its detection delay asymptotically matches that of the classical CUSUM as ARL grows.

However, when $\epsilon<2\Delta$, the upper bound we established for the WADD of DP-CUSUM becomes $\frac{\log\gamma}{ I_0}(\frac{2\Delta}{\epsilon}) (1+o(1))$, which is greater than the optimal detection delay $\frac{\log\gamma}{I_0} (1+o(1))$ of the classical (non-private) CUSUM. This implies that our proposed DP-CUSUM will still preserve data privacy under the stronger privacy constraint, albeit at the cost of some detection efficiency. This illustrates the fundamental trade-off between increased detection delay and privacy gain. Specifically, we can use the privacy parameter $\epsilon$ to quantify the privacy gain of a differentially private method. A lower \(\epsilon \) value indicates a higher level of privacy, while a higher \(\epsilon \) value provides less privacy but greater accuracy. The exact CUSUM procedure given in Eq. \eqref{eq:stCUSUM} can be viewed as the extreme case with $\epsilon=+\infty$ (i.e., no privacy at all), as the exact CUSUM procedure does not satisfy any finite differential privacy guarantee. The WADD in Eq.~\eqref{eq:bd} establishes an explicit privacy–delay trade-off that links the delay directly to $\epsilon$, where the delay approaches that of the exact CUSUM as $\epsilon$ becomes sufficiently large. 
Moreover, it remains an open question what the tight information-theoretic lower bound for the WADD is under the $\epsilon$-DP constraint \eqref{eq:dp_def} when $\epsilon < 2\Delta$, and whether DP-CUSUM retains asymptotic optimality in this regime. We leave this as a direction for future work.

\begin{remark}[Relaxed DP for $n>n_0$] The $\epsilon$-DP requirement in Eq.~\eqref{eq:dp_def} may appear restrictive when the sample size $n$ is small (e.g., $n=1$). In some cases, privacy protection is required only for data collected after a certain point in time—for example, when a sequence begins with publicly available or non-sensitive warm-start data. In such settings, the DP condition in Eq.~\eqref{eq:dp_def} can be relaxed to hold only for all $n > n_0$ for some positive integer $n_0$, thereby enforcing privacy guarantees only for newly arriving data. In such cases, our DP-CUSUM procedure in Algorithm~\ref{alg:dp-cusum} could be modified accordingly so that Laplace noise is added only when $n > n_0.$ That is, $\tilde S_t  = S_t + Z_t \ind{t>n_0}$. Note that in such cases, the proof techniques of Theorems~\ref{thm:arl} and~\ref{thm:edd} remain valid with only minor modifications that do not affect the asymptotic order of the ARL and WADD as $b \to \infty$, provided that $n_0$ is a fixed integer independent of $b$. Therefore, our asymptotic results and the argument for asymptotic optimality remain unchanged under this relaxed DP requirement.
\end{remark}

\section{Proposed \algname{} Procedure with Unbounded Log-Likelihood Ratio}\label{sec:general}

In the previous sections, we assumed that the LLR $\ell(x)$ is bounded, which ensures a finite global sensitivity $\Delta<\infty$ and allows the DP-CUSUM procedure proposed in \eqref{eq:dp-cusum-stat} and \eqref{eq:dp-cusum-stop} to achieve $\epsilon$-differential privacy. However, in many practical scenarios—particularly when dealing with continuous distributions such as Gaussian or exponential families—the LLR $\ell(x)$ may be unbounded, and the global sensitivity $\Delta$ becomes infinite. 

To address this, in this section, we adopt a relaxed version of differential privacy by allowing the user to select a small probability $\delta$ of failure, resulting in a relaxed $(\epsilon, \delta)$-differential privacy definition \cite{cummings2018differentially}. In this setting, we continue to use the DP-CUSUM procedure but modify the sensitivity parameter in Algorithm~\ref{alg:dp-cusum} to depend on the underlying data distributions and the user-defined privacy constraint $\delta$. This distribution-dependent choice ensures that the relaxed $(\epsilon, \delta)$-differential privacy condition is satisfied, as formalized below. To distinguish it from the vanilla DP‑CUSUM in the finite-$\Delta$ case, we refer to this modified procedure as \algname{}.

We first present the \algname{} procedure when the sensitivity of $\ell(x)$ is unbounded. For a given $\delta>0$, define 
\begin{align}\label{eq:Adelta}
A_{\delta}=\inf\{t:\max_{i=0,1}\,\Pro_{X\sim f_i}\left( 2 |\ell(X)|\ge t\right)\le \delta/2\}.
\end{align}
That is, $A_\delta$ guarantees that the event $2|\ell(X)| \ge A_\delta$ occurs with probability at most $\delta/2$ under both the pre- and post-change distributions. 
We then apply the DP-CUSUM procedure as described in the previous section or Algorithm~\ref{alg:dp-cusum}, with the sensitivity parameter $\Delta$ replaced by the distribution-dependent quantity $A_\delta$. The following theorem establishes that this modified procedure satisfies a relaxed $(\epsilon, \delta)$-differential privacy guarantee.

\begin{theorem}[($\epsilon,\delta$)-DP guarantee]\label{thm4}
When the LLR is unbounded, the stopping time $\tilde{T}$ in \eqref{eq:dp-cusum-stat} and \eqref{eq:dp-cusum-stop} with the choice of $\Delta=A_{\delta}$ satisfies for every $t \in \mathbb{N}$, and every pair of neighboring data streams $X_{(1:t)}, X'_{(1:t)}$ that differs in only one entry $X_k\neq X_k'$, the distribution over the stopping time $\tilde{T}$ satisfies the following weaker differential privacy constraint: 
\begin{multline} \label{eq:dp_def_delta}
\Pro_{\tilde T, X_k,X'_k}(\tilde{T} =t \mid X_{(1:t)\setminus k})  \\ \leq e^\epsilon \Pro_{\tilde T, X_k,X'_k}(\tilde{T} =t \mid X'_{(1:t)\setminus k}) + \delta,  
\end{multline}
where the probability is taken over the randomness in $\tilde{T}$ and the differing entry $X_k,X_k',$ while the rest of data entries are fixed.
\end{theorem}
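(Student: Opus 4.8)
The plan is to reduce the unbounded case to the bounded analysis underlying Theorem~\ref{thm:dp} by conditioning on a high-probability ``good event'' on which the realized log-likelihood ratios at the differing coordinate are small enough that the effective sensitivity of the whole statistic sequence is at most $A_\delta$, and then absorbing the complementary event into the additive slack $\delta$.

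First I would record the sensitivity bookkeeping for the CUSUM recursion. If two streams agree everywhere except at position $k$, carrying values $x_k$ and $x_k'$ there, then from $S_t=\max(0,S_{t-1})+\ell(X_t)$ in \eqref{eq:1} one has $|S_k-S_k'|=|\ell(x_k)-\ell(x_k')|$, while for $t>k$ the $1$-Lipschitz map $s\mapsto\max(0,s)$ gives $|S_t-S_t'|\le|S_{t-1}-S_{t-1}'|$; by induction $\sup_t|S_t-S_t'|\le|\ell(x_k)-\ell(x_k')|$. Thus the privatized statistic sequence has sensitivity at most $|\ell(x_k)-\ell(x_k')|$, so whenever this quantity is $\le A_\delta$ the Laplace noise of scale $2A_\delta/\epsilon$ is exactly calibrated and the AboveThreshold argument proving Theorem~\ref{thm:dp} applies verbatim with $\Delta$ replaced by $A_\delta$. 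This yields, for every fixed pair $(x_k,x_k')$ with $|\ell(x_k)-\ell(x_k')|\le A_\delta$, the pointwise pure-DP bound $\Pro_{\tilde T}(\tilde T=t\mid x_k,X_{(1:t)\setminus k})\le e^{\epsilon}\,\Pro_{\tilde T}(\tilde T=t\mid x_k',X_{(1:t)\setminus k})$.

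Next I would introduce the good event $G=\{2|\ell(X_k)|<A_\delta\}\cap\{2|\ell(X_k')|<A_\delta\}$. On $G$ the triangle inequality gives $|\ell(X_k)-\ell(X_k')|\le|\ell(X_k)|+|\ell(X_k')|<A_\delta$, so the pointwise bound of the previous step holds. Since each of $X_k,X_k'$ follows one of $f_0,f_1$, the defining property of $A_\delta$ in \eqref{eq:Adelta} gives $\Pro(2|\ell(X_k)|\ge A_\delta)\le\delta/2$ and likewise for $X_k'$, whence a union bound yields $\Pro(G^c)\le\delta$. Finally I would assemble the $(\epsilon,\delta)$ guarantee by the standard good-event decomposition: writing the left-hand side of \eqref{eq:dp_def_delta} as an expectation over $(X_k,X_k')$ of the conditional stopping probability and splitting on $G$ and $G^c$, I bound the integrand on $G$ by $e^{\epsilon}$ times the corresponding $X_k'$-probability (then drop the indicator to recover the full right-hand probability), and on $G^c$ I bound the stopping probability trivially by $1$ and invoke $\Pro(G^c)\le\delta$; combining the two pieces produces \eqref{eq:dp_def_delta}.

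The main obstacle is the first step: verifying that the AboveThreshold mechanism from Theorem~\ref{thm:dp} is driven solely by the sensitivity at the single differing coordinate, so that controlling only $|\ell(X_k)|$ and $|\ell(X_k')|$—rather than a global supremum—already suffices to apply the bounded-case argument. A secondary point requiring care is that the averaging over the random differing entries must be legitimate, which is exactly why the probability in \eqref{eq:dp_def_delta} is taken jointly over $\tilde T$, $X_k$, and $X_k'$ and why the additive $\delta$ measures the tail of $|\ell(\cdot)|$ under $f_0$ and $f_1$ rather than any failure of the algorithm's internal randomness.
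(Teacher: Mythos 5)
Your proposal is correct and follows essentially the same route as the paper: your good event $G$ is exactly the paper's $\mathcal{E}_\delta=\{|\ell(X_k)|\le A_\delta/2,\ |\ell(X'_k)|\le A_\delta/2\}$, the conditional application of the bounded-case AboveThreshold argument with effective sensitivity $|\ell(X_k)-\ell(X'_k)|\le A_\delta$ is the same reduction, and the final good/bad-event decomposition with $\Pro(G^c)\le\delta$ matches the paper's concluding display. Your explicit remark that the CUSUM recursion's $1$-Lipschitz truncation makes the realized sensitivity depend only on the differing coordinate is a slightly more careful statement of what the paper asserts implicitly, but it is not a different argument.
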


We would like to emphasize that this relaxed $(\epsilon, \delta)$-differential privacy guarantee in \eqref{eq:dp_def_delta} is not the same as the classical definition of $(\epsilon, \delta)$-DP in literature \cite{dwork2006calibrating}. In the classical definition, the entire database---including the differing entry---is fixed, and the probability is taken only over the internal randomness of the algorithm, that is, $\Pro_{\mathcal{A}}(\mathcal{A}(D)\in\mathcal{S})\le e^{\epsilon}\Pro_{\mathcal{A}}(\mathcal{A}(D')\in\mathcal{S})+\delta$ for all neighboring datasets $D,D'$ and measurable sets $\mathcal{S}$. In contrast, our slightly modified formulation \eqref{eq:dp_def_delta} treats the differing entry itself as random and introduces a small probability $\delta$ to capture rare failure events where the LLR at differing entries exceeds a bounded threshold, i.e., $|\ell(X_k)-\ell(X_k')|>A_\delta$. Conditional on the high-probability event $|\ell(X_k)|\le A_\delta/2,|\ell(X_k')|\le A_\delta/2$, our procedure still achieves $\epsilon$-DP, and $\delta$ thus accounts for the probability of violating this condition. The same adaptation has also been used in \cite{cummings2018differentially} for handling unbounded LLRs in private change detection. To avoid confusion, we named our proposed procedure as \algname{}  procedure instead of $(\epsilon,\delta)$-DP-CUSUM procedure.

We then extend our analysis in Section \ref{sec:dpcusum} to the current setting where the LLR $\ell(\cdot)$ is unbounded. Following similar proof strategies as in Section \ref{sec:dpcusum}, we derive a lower bound for the ARL and an upper bound for the WADD of our \algname{} procedure, which satisfies the relaxed $(\epsilon, \delta)$-differential privacy guarantee.

\begin{corollary}[ARL of \algname{}] \label{thm:arl2}
For the general case with unbounded LLR, for any $\epsilon > 0$, $\delta\in(0,1),$ $b>2,$ and the corresponding $A_\delta$ defined in \eqref{eq:Adelta}, 
we have 
\begin{equation}\label{eq:arl2}
\Exp_{\infty}[\tilde{T}(b)]\ge \frac{e^{h(\epsilon,A_\delta) b -2}}{4(b+1)^2},
\end{equation}
where $h(\epsilon, A_\delta)=\min\{\frac{\epsilon}{2A_\delta}, 1\}$.
\end{corollary}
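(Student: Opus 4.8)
The plan is to observe that \eqref{eq:arl2} is exactly the statement of Theorem~\ref{thm:arl} with the bounded sensitivity $\Delta$ replaced by the distribution-dependent constant $A_\delta$, and then to argue that the proof of Theorem~\ref{thm:arl} carries over verbatim. The crucial point is that the ARL is a property of the false-alarm regime: it is evaluated under $\Pro_\infty$, where every $X_t$ follows $f_0$, so the relaxed $(\epsilon,\delta)$-privacy guarantee of Theorem~\ref{thm4} plays no role here. The only feature of the \algname{} procedure that matters for the ARL is the scale of the injected Laplace noise, which is now $2A_\delta/\epsilon$ rather than $2\Delta/\epsilon$; structurally the recursion \eqref{eq:dp-cusum-stat} and the stopping rule \eqref{eq:dp-cusum-stop} are unchanged.

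First I would isolate how $\Delta$ enters the proof of Theorem~\ref{thm:arl}. It appears solely through the tail decay rate $\epsilon/(2\Delta)$ of the noise terms $Z_t$ and $W$, since for $Z\sim\text{Lap}(2\Delta/\epsilon)$ one has $\Pro(Z\ge z)=\tfrac12 e^{-z\epsilon/(2\Delta)}$. The contribution of the CUSUM statistic itself is governed by the distribution-free tail bound $\Pro_\infty(S_t\ge a)\le t\,e^{-a}$, which I would re-establish by the standard likelihood-ratio argument: writing $S_t=\max_{1\le k\le t}\sum_{j=k}^t\ell(X_j)$, a union bound over $k$ together with a Chernoff/Markov inequality and the identity $\Exp_\infty\big[\prod_{j=k}^t f_1(X_j)/f_0(X_j)\big]=1$ gives $\Pro_\infty(\sum_{j=k}^t\ell(X_j)\ge a)\le e^{-a}$. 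Crucially this bound uses only that the likelihood ratio has unit mean under $f_0$, and therefore holds whether or not $\ell$ is bounded.

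The main thing to verify — and the only place the argument could fail — is that boundedness of $\ell$ is genuinely unused in the ARL portion of Theorem~\ref{thm:arl}, in sharp contrast to the DP guarantees of Theorems~\ref{thm:dp} and~\ref{thm4}, where the sensitivity bound is essential. Once this is confirmed, the two rates combine exactly as before: the event $\{\tilde S_t\ge b+W\}$ forces $S_t+(Z_t-W)\ge b$, whose probability decays at the slower of the two rates, namely $\min\{\epsilon/(2A_\delta),1\}=h(\epsilon,A_\delta)$. Propagating $h(\epsilon,A_\delta)$ through the identical blocking and summation steps that produce the $(b+1)^{-2}$ factor and the constant $e^{-2}/4$ in Theorem~\ref{thm:arl} then yields \eqref{eq:arl2}, completing the proof.
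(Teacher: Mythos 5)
Your proposal is correct and matches the paper's own (very short) proof: the argument for Theorem~\ref{thm:arl} nowhere uses boundedness of $\ell$ --- only the Laplace noise scale and the pre-change tail bound for $S_t$, which rests solely on the unit mean of the likelihood ratio under $f_0$ --- so one simply substitutes $A_\delta$ for $\Delta$. One small correction: the tail bound actually invoked in that proof (via Corollary~\ref{cor:cusumMGF}) is the maximal-inequality form $\Pro_\infty(S_t\ge a)\le e^{-a}$ without the factor $t$, not the union-bound version $t\,e^{-a}$ you quote; the $t$-free version also holds for unbounded $\ell$, and it is the one needed to reproduce the stated constants.
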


\begin{proof}
Note that the proof of Theorem \ref{thm:arl} does not require the log-likelihood ratio $\ell(\cdot)$ to be bounded. Consequently, the same argument still applies by replacing $\Delta$ with $A_\delta$.    
\end{proof}

\begin{corollary}[WADD of \algname{}] \label{thm:edd2}
For the general case with unbounded LLR, assume the log-likelihood $\ell(X)$ is $\sigma^2$-sub-Gaussian under both the pre- and post-change distributions. That is, for $i=0,1,$ and $\lambda\in\mathbb R,$
$$\Exp_{X\sim f_{i}}[e^{\lambda (\ell(X) - \Exp[\ell(X)]) }] \le e^{ \frac{\lambda^2\sigma^2}{2}}.$$ 
Then, for any $\epsilon > 0$, $\delta\in(0,1), b> 0,$ and the corresponding $A_\delta$ defined in \eqref{eq:Adelta}, we have the worst-case average detection delay satisfies
\begin{equation}\label{eq:wadd2}
\text{WADD}(\Tilde T(b))\le \frac{b}{I_0} + \frac{4 A_\delta}{I_0^{3/2}\epsilon}\sqrt{b}+ C,
\end{equation}  
where $C$ is a constant that does not depend on the threshold $b$ but depends on $\epsilon, \delta,I_0, \sigma^2$.
\end{corollary}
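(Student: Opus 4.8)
The plan is to reduce the corollary to the bounded‑case analysis already carried out in Theorem~\ref{thm:edd}. By Lemma~\ref{lem:wadd0} the worst case is attained at $\tau=0$, so it suffices to bound $\Exp_0[\tilde T(b)]$. The key observation is that the delay analysis of Theorem~\ref{thm:edd} never invokes the differential‑privacy guarantee itself; it uses only (i) the scale $2\Delta/\epsilon$ of the injected Laplace noise $Z_t$ and $W$, and (ii) the law of the increments $\ell(X_j)$, entering through concentration of the partial sums around their mean $nI_0$. In the unbounded setting the procedure is run verbatim with $\Delta$ replaced by $A_\delta$ (this is exactly the $\delta$-DP-CUSUM of Theorem~\ref{thm4}), so the noise scale becomes $2A_\delta/\epsilon$. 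Hence the whole argument transfers once the concentration input (ii) is re‑established without boundedness.

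First I would isolate the single place where boundedness of $\ell$ was used in Theorem~\ref{thm:edd}: it guaranteed, via a Hoeffding‑type bound, that $R_n:=\sum_{j=1}^{n}\ell(X_j)$ stays close to $nI_0$ with exponentially small failure probability, and that the overshoot at the crossing is $O(1)$. The $\sigma^2$-sub‑Gaussian hypothesis is tailored to supply exactly this: the Chernoff bound yields $\Pro_0(R_n-nI_0\le -u)\le \exp(-u^2/(2n\sigma^2))$, with the analogous two‑sided estimate holding uniformly under both $f_0$ and $f_1$. This substitutes for every appeal to boundedness in the delay estimate, at the cost of making the residual constant depend on $\sigma^2$ rather than $\Delta$, consistent with the stated dependence of $C$ on $\epsilon,\delta,I_0,\sigma^2$.

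I would then run the first‑passage computation exactly as in Theorem~\ref{thm:edd}. Dominating $\tilde T(b)$ from above by the easier stopping time used there, which under $\Pro_0$ replaces the truncated statistic $S_t$ by the smaller raw drifting walk $R_t$ and hence stops no earlier, I would write $\Exp_0[\tilde T(b)]\le \sum_{n\ge 0}\Pro_0(\text{not stopped by }n)$, condition on the once‑drawn threshold noise $W$, and split the sum at $n^\ast\approx b/I_0$. The head contributes the leading term $b/I_0$; the tail is controlled by combining the sub‑Gaussian estimate above with the Laplace tails of $Z_n$ and $W$. As in Theorem~\ref{thm:edd}, the fluctuations of the noise‑perturbed statistic about the threshold, at the scale $2A_\delta/\epsilon$, accumulate into the correction $\frac{4A_\delta}{I_0^{3/2}\epsilon}\sqrt{b}$, while the boundary and overshoot terms together with the contribution of $W$ (handled using $\Exp[W]=0$ and its Laplace tail) are $b$-independent and absorbed into $C$.

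I expect the main obstacle to be the bookkeeping in the tail sum, on two points. The per‑step noise $Z_t$ is Laplace, hence sub‑exponential rather than sub‑Gaussian, so one must verify that its tail contributes only a $b$-independent constant, with the $\sqrt b$ growth traced to the correct source at scale $2A_\delta/\epsilon$ as in the bounded case; and the threshold noise $W$ is drawn once and is therefore correlated across all $n$, so it cannot be treated as fresh randomness and must be handled by conditioning and then averaging via $\Exp[W]=0$ together with its tail. The remaining technical step is to confirm that the overshoot at the crossing stays $O(1)$ under sub‑Gaussian (rather than bounded) increments, so that it too lands in the constant $C(\epsilon,\delta,I_0,\sigma^2)$.
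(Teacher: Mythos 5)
Your proposal follows essentially the same route as the paper's own (very brief) proof: reduce to $\tau=0$ via Lemma~\ref{lem:wadd0}, rerun the argument of Theorem~\ref{thm:edd} with the Laplace scale $2\Delta/\epsilon$ replaced by $2A_\delta/\epsilon$, and substitute the sub-Gaussian MGF bound for the single Hoeffding step, with $\sigma^2$ absorbed into the constant $C$. In fact you go slightly further than the paper by explicitly flagging the two points it glosses over --- that the overshoot $\Exp_0[\ell(X_\nu)]$ at the crossing needs a separate $O(1)$ bound once $\ell$ is unbounded, and that the once-drawn threshold noise $W$ must be handled by conditioning --- both of which are genuine loose ends in the paper's ``we omit the technical derivations'' treatment.
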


Similar to Section~\ref{sec:dpcusum}, the lower bound in \eqref{eq:arl2} can be used to select the threshold $b$ as
\begin{equation}\label{eq:threshold2}
b =b_{\gamma}=  \frac{\log\gamma}{h(\epsilon,A_\delta)}(1+o(1)),
\end{equation}
to guarantee the ARL constraint $\Exp_\infty[\tilde T] \ge \gamma$ is met. Under this choice of threshold, the resulting WADD of the \algname{} procedure is upper-bounded by
\begin{equation}\label{eq:wadd-asym2}
\text{WADD}(\Tilde T(b_{\gamma})) \le \frac{\log\gamma}{h(\epsilon,A_\delta) I_0} (1+o(1)),  
\end{equation}
which provides implications on how the privacy parameters $\epsilon$ and $\delta$ affect the detection performance of our proposed \algname{} procedure. In particular, note $h(\epsilon,A_{\delta})\le 1.$

Our proposed procedure will achieve the first-order {\it asymptotical optimality} as the classical CUSUM procedure if $\epsilon \ge 2A_{\delta}.$ On the other hand, if $\epsilon<2A_\delta$, the upper bound becomes $\frac{\log\gamma}{I_0} (\frac{2A_\delta}{\epsilon})(1+o(1))$, which is greater than the optimal detection delay $\frac{\log\gamma}{I_0} (1+o(1))$ of classical CUSUM. Therefore, a smaller value of $h$ implies greater degradation in detection performance due to the privacy requirement. We further provide a Gaussian example below to illustrate how the privacy parameters $\epsilon$ and $\delta$ affect the detection efficiency of our proposed procedure by visualizing the value of $h(\epsilon, A_{\delta}).$ We emphasize that $\delta$ is user-specified and it yields the trade-off between privacy stringency and detection delay. In practice, one can choose the largest $\delta$ acceptable under one’s privacy requirements to make $h$ closer to 1, and thus reduce the detection delay.

\begin{example}[Gaussian distributions.]
We provide an example under Gaussian distributions to demonstrate the theoretical effect of privacy parameters $\epsilon,\delta$ on detection delay. Assume one-dimensional Gaussian mean shift from the pre-change distribution $N(0,1)$ to the post-change distribution $N(\mu,1)$ with $\mu>0$. Therefore $f_0(x)=\frac{1}{\sqrt{2\pi}}e^{-\frac12x^2}$ is the pdf of pre-change distribution, and $f_1(x)=\frac{1}{\sqrt{2\pi}}e^{-\frac12(x-\mu)^2}$ is the pdf of post-change distribution. The LLR equals $\ell(X) = \log f_1(X)/f_0(X)=\mu X - \frac12\mu^2$. Taking $A_\delta=2 |\mu| \cdot z_{\delta/4} + \mu^2$ ensures $\max_{i=0,1}\,\Pro_{X \sim f_i}\left(2|\ell(X)|\ge A_\delta\right)\le \delta/2$. 
The quantity $h(\epsilon,A_\delta) = \min\{\frac{\epsilon}{2A_\delta},1\}$ is the effective privacy scaling factor that directly influences the asymptotic performance of the \algname{} procedure. In particular, the best-case scenario corresponds to $h(\epsilon,A_\delta) = 1$, under which the \algname{} procedure is guaranteed to be first-order asymptotically optimal. In contrast, smaller values of $h$ indicate a greater degradation in detection efficiency. 
Fig.~\ref{fig:heatmap_h} visualizes $h(\epsilon, A_\delta)$ over a wide range of $\epsilon$ and $\delta$ values for different magnitude of the mean shift $\mu\in\{0.1,0.25,0.5\}$. The plots show that $h(\epsilon, A_\delta)$ generally decreases as $\mu$ increases, because $A_\delta$ increases with $\mu$, thereby leading to a stronger impact of privacy on detection performance. 
The red dashed line in Fig.~\ref{fig:heatmap_h} serves as the boundary for optimal detection given by Theorem~\ref{thm:edd2}: any privacy parameters satisfying $\epsilon\ge 2A_\delta$ (equivalently $h(\epsilon,A_\delta)=1$) guarantee that the \algname{} procedure achieves first‑order asymptotic optimality.

\begin{figure}[!t]
    \centering \includegraphics[width=0.95\linewidth]{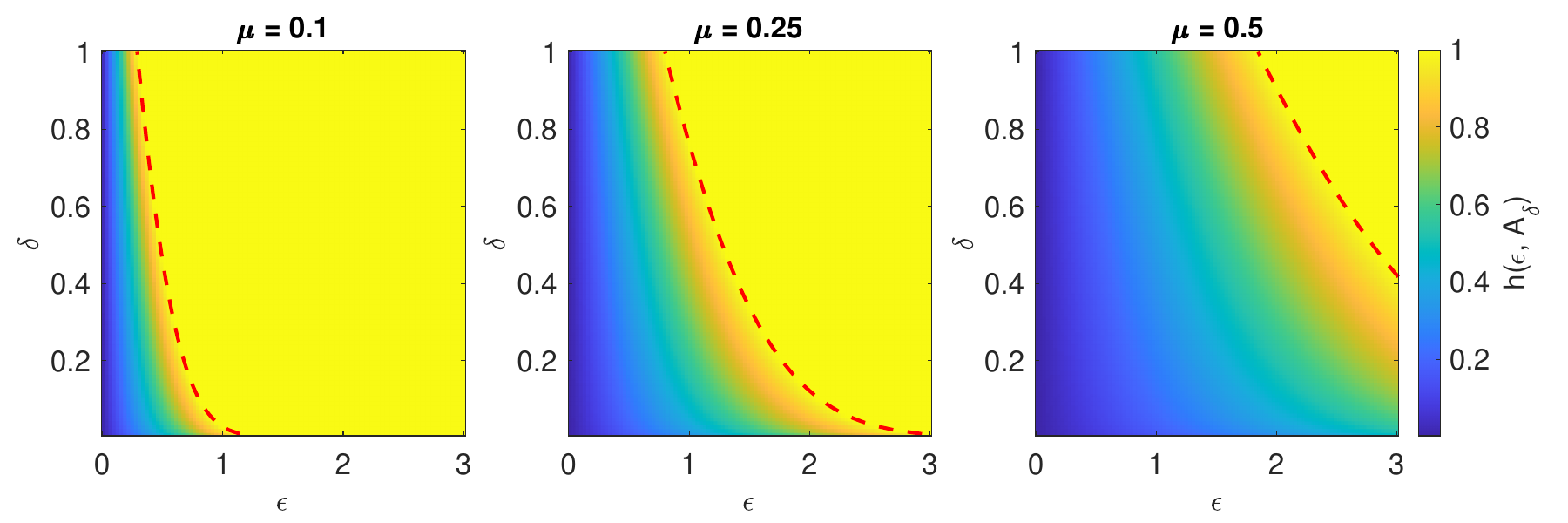}
    \vspace{-0.1in}
    \caption{Heatmaps of the effective privacy factor $h(\epsilon, A_\delta)$ under a Gaussian mean shift from $N(0,1)$ to $N(\mu,1)$, for $\mu$ in $\{0.1, 0.25, 0.5\}$. Each panel shows how $h$ varies with $\epsilon \in (0, 3)$ and $\delta \in (0, 1)$. The dashed red curve denotes $\epsilon = 2A_\delta$, i.e., $h(\epsilon, A_\delta) = 1$; above this curve, the DP-CUSUM procedure is proved to be asymptotically optimal by Eq.~\eqref{eq:wadd-asym2}.}
    \label{fig:heatmap_h}
\end{figure}
\end{example}

\begin{remark} From Fig.~\ref{fig:heatmap_h}, we observe that stronger signals require larger values of $\epsilon$ and $\delta$ to attain optimality. Under the Gaussian example, this is because $A_\delta=2 |\mu| \cdot z_{\delta/4} + \mu^2$ increases with $|\mu|$ and thus larger $\mu$ requires larger $\epsilon$ value to ensure optimality; conversely, under fixed $\mu$, smaller $\delta$ also leads to larger $A_\delta$ and thus requires larger $\epsilon$. It is worthwhile mentioning that this phenomenon is universal across general distributions. Intuitively, for stronger change signals (such as the case with larger $\mu$ in the Gaussian example), the LLR tends to take larger values with higher probability (and can become unbounded as assumed). Such a larger magnitude of LLR makes the CUSUM statistic more revealing of the raw data, necessitating stronger noise injection to preserve privacy---thereby degrading detection performance and requiring more relaxed privacy parameters $(\epsilon,\delta)$ for optimality. In contrast, for weaker change signals (such as smaller $\mu$ in the Gaussian setting), the LLR remains smaller and bounded with higher probability, requiring less noise and achieving asymptotic optimality even under smaller $(\epsilon,\delta)$. These observations also demonstrate the fundamental trade-off between detection delay and privacy gain: larger changes are easier to detect but require more effort to satisfy the DP guarantee. 
\end{remark}

\section{Numerical Results}\label{sec:numerical}

In this section, we evaluate the empirical performance of the proposed DP-CUSUM procedure under both bounded and unbounded log-likelihood ratio settings. For each configuration, we simulate the ARL under the pre-change regime and the WADD under the post-change regime.

\subsection{DP-CUSUM with Bounded LLR: Laplace Distribution}\label{num:lap}

We consider a mean shift in Laplace distribution family $\text{Lap}(\mu,1),$ which has a pdf $f(x|\mu)=\exp\left(-|x - \mu|\right)/2.$ Note we use a different notation here because we have used the shorthand notation $\text{Lap}(b)$ before for denoting the zero‑mean Laplace noise added in the detection procedure. 
We let the pre-change distribution be $\text{Lap}(0,1)$ and the post-change distribution be $\text{Lap}(\mu,1)$, with a nonzero post-change mean $\mu\neq 0$. Note the LLR between $\text{Lap}(\mu,1)$ and $\text{Lap}(0,1)$ is $\ell(x) = \left( |x| - |x - \mu| \right),$ which implies the sensitivity of the LLR is $\Delta = \sup_{x,x'\in \mathbb R} |\ell(x) - \ell(x')| = 2|\mu|$. 

We consider two different mean shift magnitudes $\mu=0.2$ and $\mu=0.5$. Under each mean value, we consider varying privacy parameters. Specifically, for $\mu=0.2$, we consider $\epsilon\in\{0.2,0.4,0.6,0.8,1\}$ and for $\mu=0.5$, we consider $\epsilon\in\{0.8,1,1.5,2\}$. These $\epsilon$ values are carefully chosen to ensure coverage around the critical threshold $\epsilon = 2\Delta$ to examine the transition in detection performance near the point of asymptotic optimality. 
To evaluate the detection performance, we implement the DP-CUSUM procedure and simulate both the average run length under the pre-change regime and the detection delay under the post-change regime. Each configuration is repeated 10,000 times to compute the average performance. We plot the results in Fig.~\ref{fig:lap-edd}.

In Fig.~\ref{fig:lap-edd}(a), it can be seen that the detection delays for $\epsilon = 0.8$ and $1$ are close to that of the exact CUSUM. This is consistent with our theoretical results on WADD in Theorem~\ref{thm:edd}. Specifically, for $\mu = 0.2$, we have $\Delta = 2\mu = 0.4$. By Eq.~(\ref{eq:bd}), the DP-CUSUM procedure is guaranteed to be asymptotically optimal when $\epsilon \ge 2\Delta=0.8$, which aligns with Fig.~\ref{fig:lap-edd}(a). As $\epsilon$ decreases, the delay increases, especially at larger ARL values, reflecting the trade-off between privacy and detection. A similar pattern is observed in Fig.~\ref{fig:lap-edd}(b) for $\mu = 0.5$, where $\Delta = 1$. It can be seen that the delay for $\epsilon = 2\Delta=2$ matches the exact CUSUM, while smaller $\epsilon$ leads to worse performance.

\begin{figure}[!t]
    \centering
    \begin{tabular}{cc}
      \includegraphics[width=0.48\linewidth]{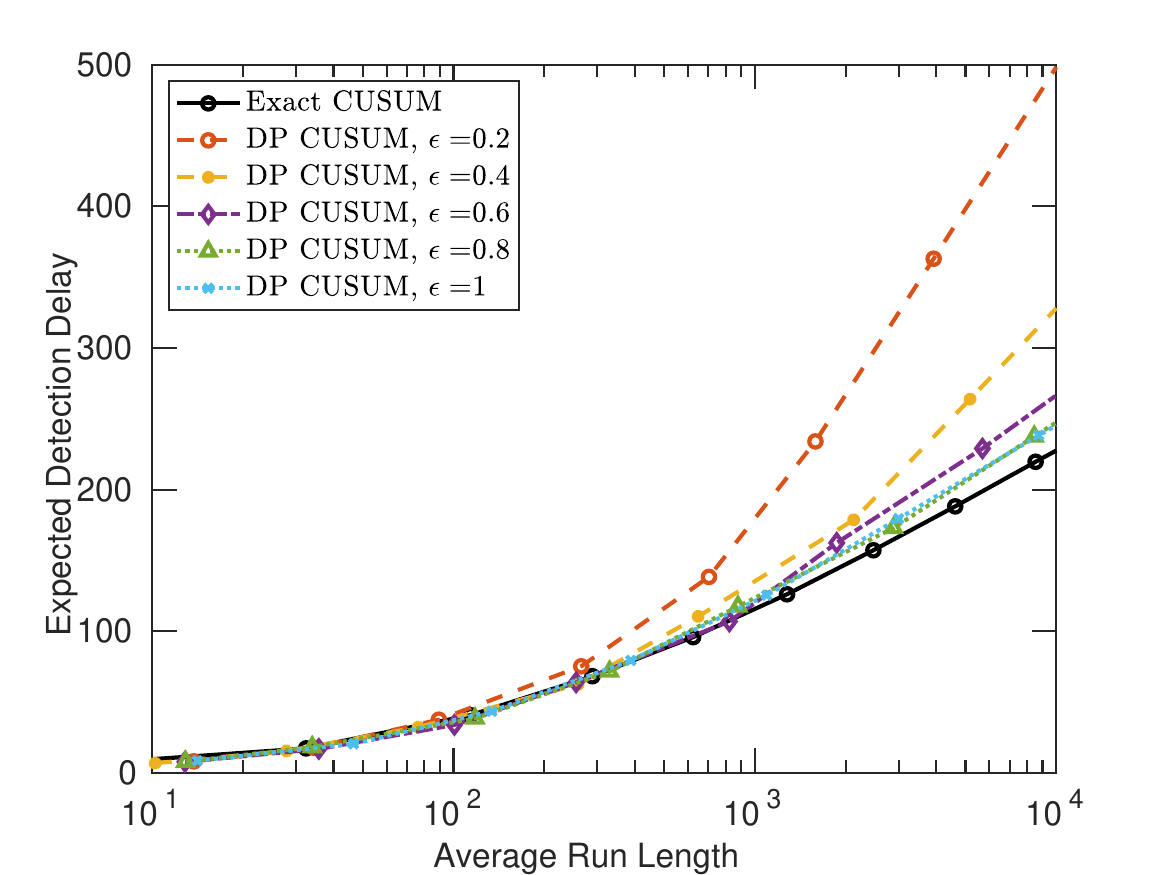}   & \includegraphics[width=0.48\linewidth]{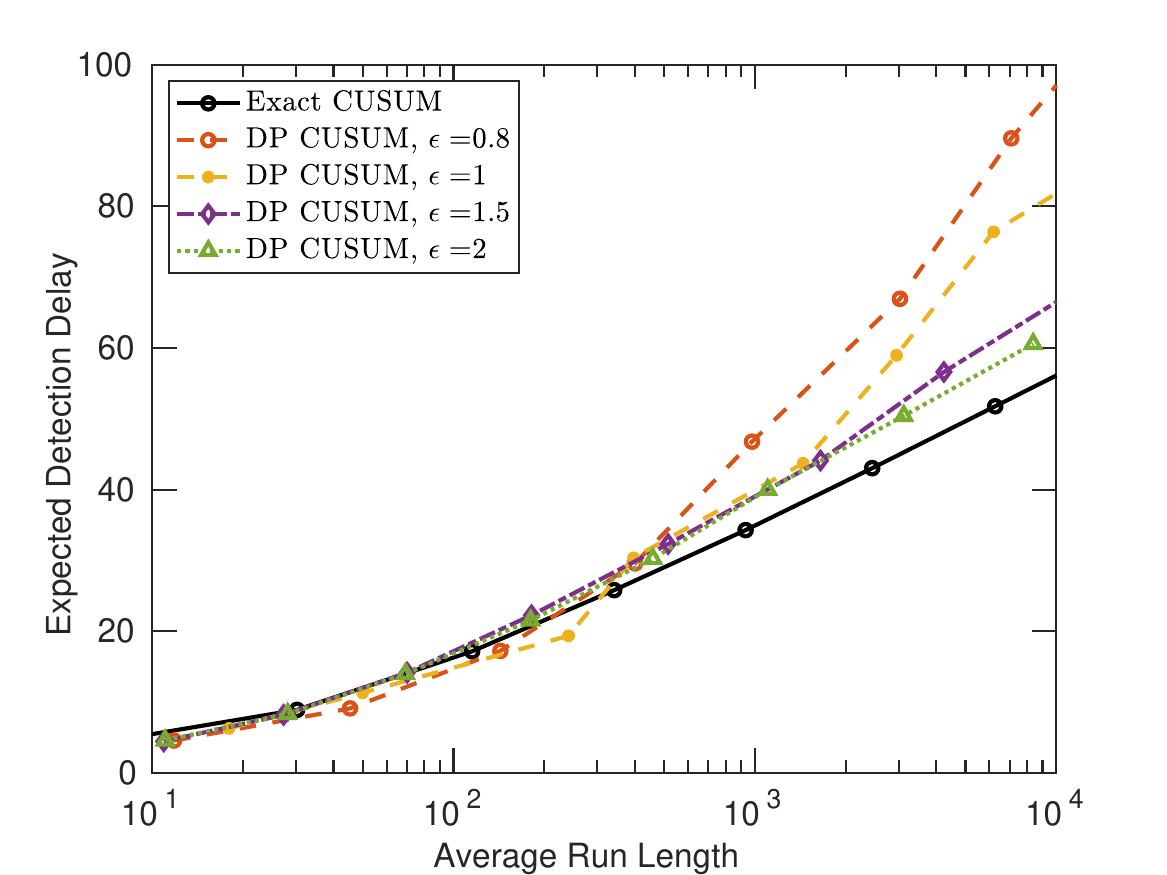} \\
       \small{(a) $\text{Lap}(0,1)\to\text{Lap}(0.2,1)$}  &  \small{(b) $\text{Lap}(0,1)\to\text{Lap}(0.5,1)$}
    \end{tabular}
    \caption{Average detection delay versus average run length of the DP-CUSUM procedure under Laplace distributions at various privacy levels $\epsilon$ for: (a) mean shift from 0 to 0.2; (b) mean shift from 0 to 0.5. The average run length and detection delay are averaged over 10,000 trials.}
    \label{fig:lap-edd}
\end{figure}

We further compare the performance of our DP-CUSUM procedure with the baseline method, online private change-point detector (OnlinePCPD) \cite{cummings2018differentially}, as shown in Fig.~\ref{fig:lap-edd-baseline}. OnlinePCPD builds on its offline Report Noisy Max framework and uses a fixed-size sliding window of width $w$ over the data stream. At each time step $t$, the algorithm considers a sliding window of recent samples ${X_{t-w+1},\ldots,X_t}$ and, for every candidate change-point $k\in\{t-w+1,\ldots,t\}$, computes the partial log-likelihood ratio $S_{k,t}=\sum_{i=k}^t\ell(X_i)$. The statistic used for detection is $\tilde S_t = (\max_{t-w+1\le k \le t} S_{k,t}) + Z_t$ with independent Laplace noise $Z_t \sim {\rm Lap}(8\Delta/\epsilon)$. The stopping rule is $\tilde T:=\inf\{t: \tilde S_t \ge b+W\}$, where $W \sim {\rm Lap}(4\Delta/\epsilon)$. This procedure is non-recursive, since all $S_{k,t}$ must be recomputed within each sliding window, resulting in an $O(w)$ per-step computational cost. While our proposed DP-CUSUM has only $O(1)$ per-step computational cost due to its recursive update of the detection statistics. Notably, in OnlinePCPD, Laplace noise $\text{Lap}(8\Delta/\epsilon)$ is added to the detection statistic and $\text{Lap}(4\Delta/\epsilon)$ to the threshold. In contrast, our method only adds $\text{Lap}(2\Delta/\epsilon)$ to both the statistic and the threshold, while still achieving the same level of $\epsilon$-DP, as guaranteed by Theorem~\ref{thm:dp}. 
 We adopt the same window size of 700 as used in \cite{cummings2018differentially}. Fig.~\ref{fig:lap-edd-baseline} shows that the detection delay of OnlinePCPD is substantially larger than that of our DP-CUSUM procedure across all privacy regimes (even when $\epsilon\le 2\Delta$), particularly when the ARL is large, under both mean shift scenarios $\mu = 0.2$ and $\mu = 0.5$ in the Laplace distribution setting.

\begin{figure}[!t]
    \centering
    \begin{tabular}{cc}
      \includegraphics[width=0.48\linewidth]{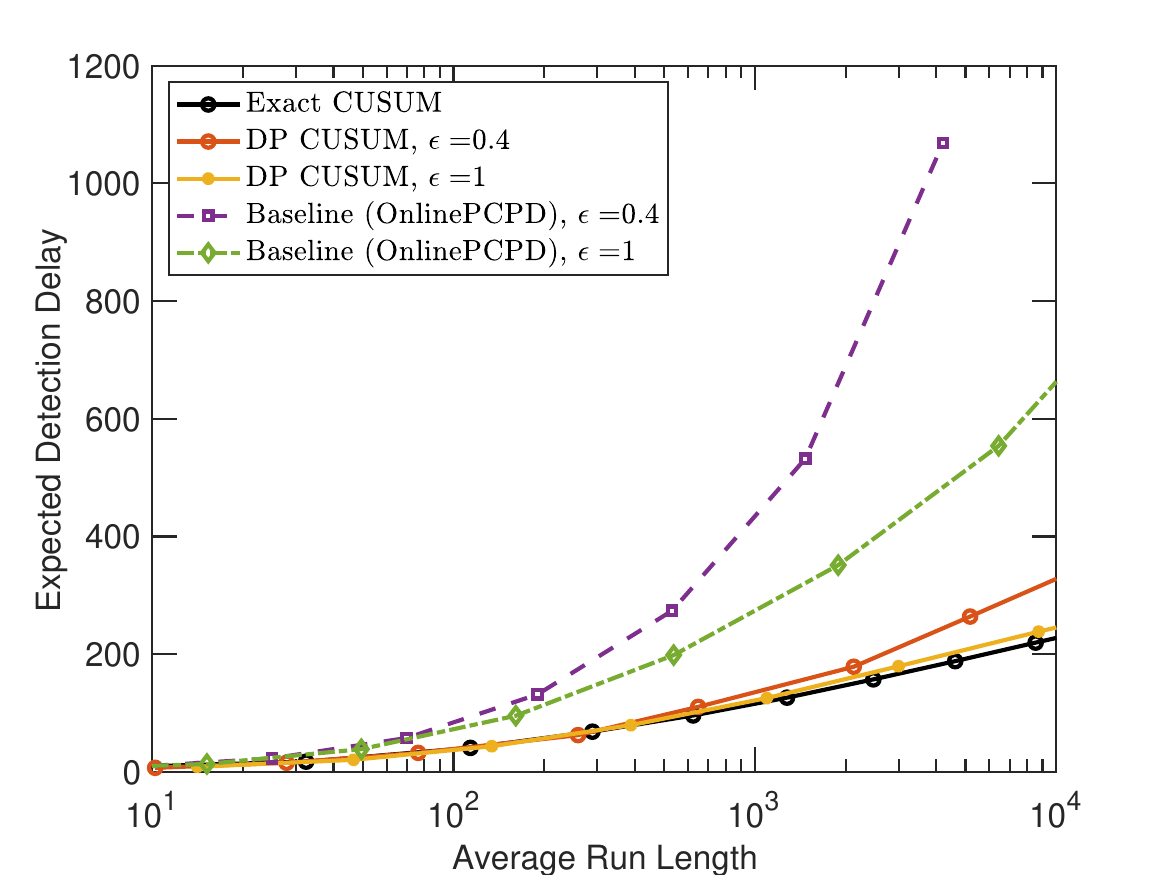}   & \includegraphics[width=0.48\linewidth]{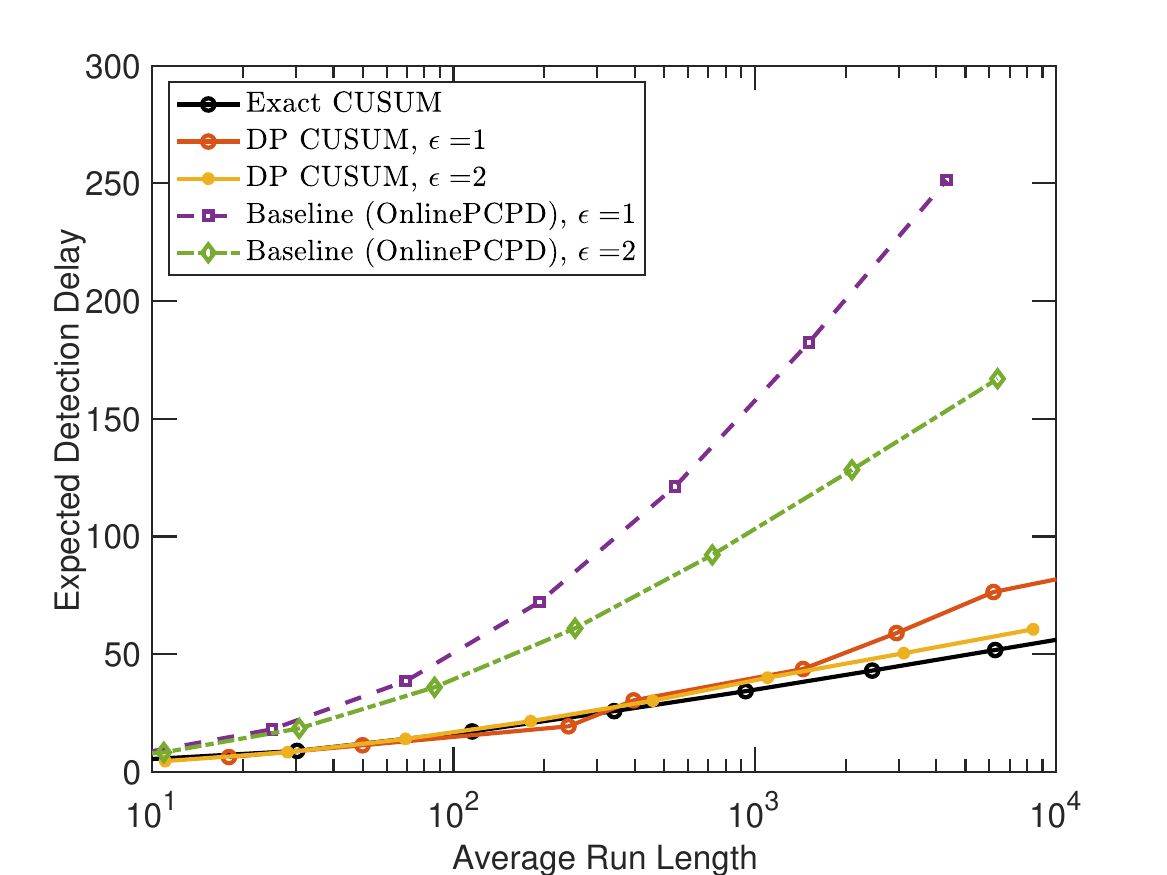} \\
       \small{(a) $\text{Lap}(0,1)$ $\to$ $\text{Lap}(0.2,1)$}  &  \small{(b) $\text{Lap}(0,1)$ $\to$ $\text{Lap}(0.5,1)$} 
    \end{tabular}
    \caption{Comparison of average detection delay of the DP-CUSUM procedure and the baseline method OnlinePCPD \cite{cummings2018differentially} under Laplace distributions with different privacy levels for: (a) mean shift from 0 to 0.2; (b) mean shift from 0 to 0.5. The average run length and detection delay are averaged over 10,000 trials. Window size in OnlinePCPD is set as 700 according to \cite{cummings2018differentially}.}
    \label{fig:lap-edd-baseline}
\end{figure}

\subsection{\algname{} with Unbounded LLR: Normal Distribution}\label{num:normal}

We consider a mean shift in the Normal distribution family $N(\mu,1)$, which has unbounded log-likelihood ratios. 
We set the pre-change distribution as $N(0, 1)$ and the post-change distribution as $N(\mu, 1)$. The LLR for an observation $x$ is given by $\ell(x) =  \mu x - \mu^2/2.$ We set $A_\delta = 2\mu \cdot z_{\delta/4} + \mu^2$, and set $\delta=0.1$ across all experiments. We simulate the detection delay and average run length of our \algname{} procedure and the exact CUSUM procedure under two $\mu$ values and varying $\epsilon$ values. Each configuration is repeated 10,000 times to compute the average performance. We consider two different mean shift magnitudes $\mu=0.1$ and $\mu=0.5$. Under each mean value, we consider varying privacy parameters. Specifically, for $\mu=0.1$, we consider $\epsilon\in\{0.5,1,1.5\}$ and for $\mu=0.5$, we consider $\epsilon\in\{0.5,2,4\}$. Again, these $\epsilon$ values are chosen to ensure coverage around the critical threshold $\epsilon = 2A_\delta$ for asymptotic optimality. We plot the results in Fig.~\ref{fig:normal-edd}.

For $\mu = 0.1$, we have $A_\delta = 0.402$. By Eq.~(\ref{eq:wadd-asym2}), the \algname{} procedure is guaranteed to be asymptotically optimal when $\epsilon \ge 2 A_\delta=0.804$, which aligns with the results in Fig.~\ref{fig:normal-edd}(a) where the detection delays for $\epsilon = 1$ and $1.5$ are close to that of the exact CUSUM. A similar pattern is observed for $\mu = 0.5$, where $A_\delta = 2.21$. In Fig.~\ref{fig:normal-edd}(b), the delay for $\epsilon = 4$ is close to that of the exact CUSUM, while a smaller $\epsilon$ leads to worse performance. It is also worth noting that performance degradation is less pronounced for smaller $\mu$ under the same $\epsilon$, likely because the detection problem is already difficult for smaller $\mu$ values---leading to larger delays even for the exact CUSUM procedure.

We again compare the performance of our \algname{} procedure with the baseline method OnlinePCPD in the Normal distribution setting (Fig.~\ref{fig:normal-edd-baseline}). For unbounded log-likelihood ratios, OnlinePCPD uses the same $A_\delta$ from \eqref{eq:Adelta} and adds Laplace noise $\text{Lap}(4A_\delta/\epsilon)$ and $\text{Lap}(8A_\delta/\epsilon)$ to the statistic and threshold, respectively. For a fair comparison, we set $\delta = 0.1$ for OnlinePCPD and use the same window size of 700 as in \cite{cummings2018differentially}. Fig.~\ref{fig:normal-edd-baseline} shows similar results: \algname{} achieves smaller detection delays than OnlinePCPD across all privacy parameters tested, especially when the ARL is large, under both $\mu = 0.1$ and $\mu = 0.5$.

\begin{figure}[!t]
    \centering
    \begin{tabular}{cc}
      \includegraphics[width=0.48\linewidth]{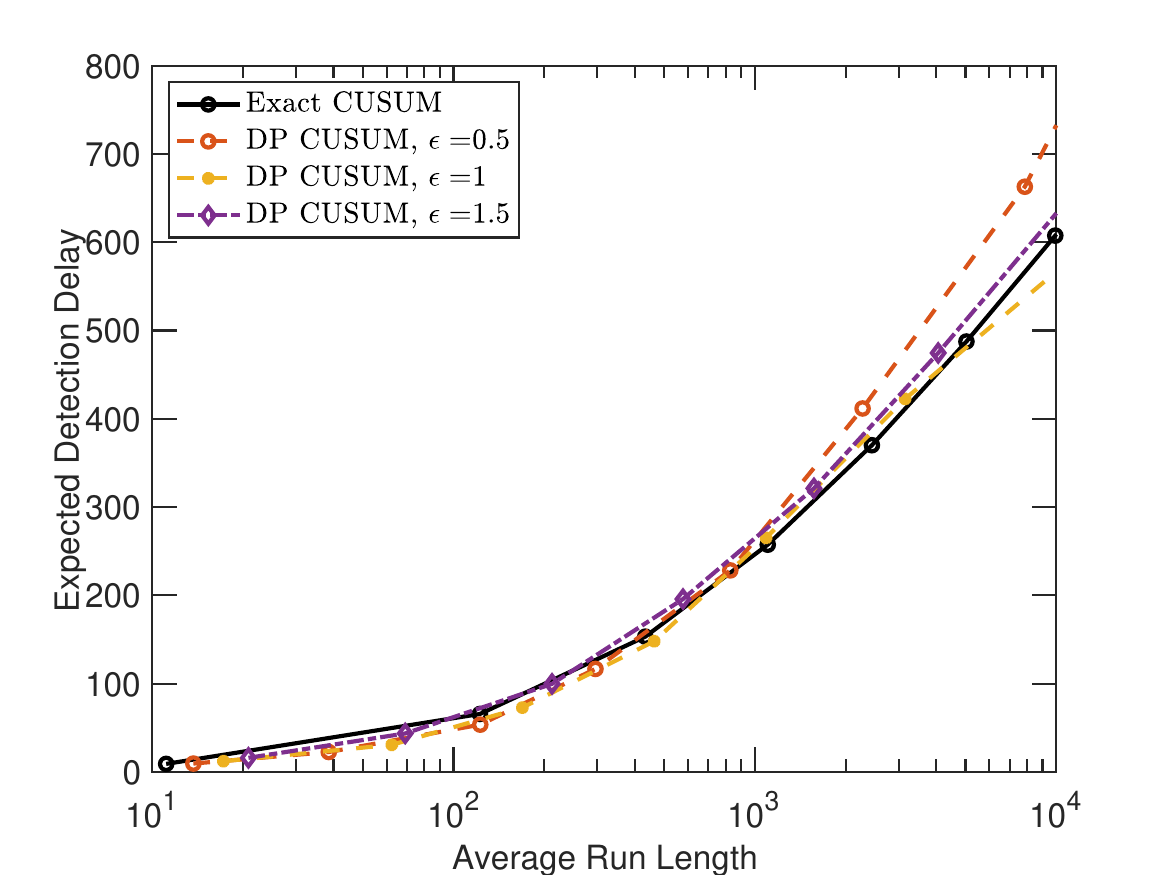}   & \includegraphics[width=0.48\linewidth]{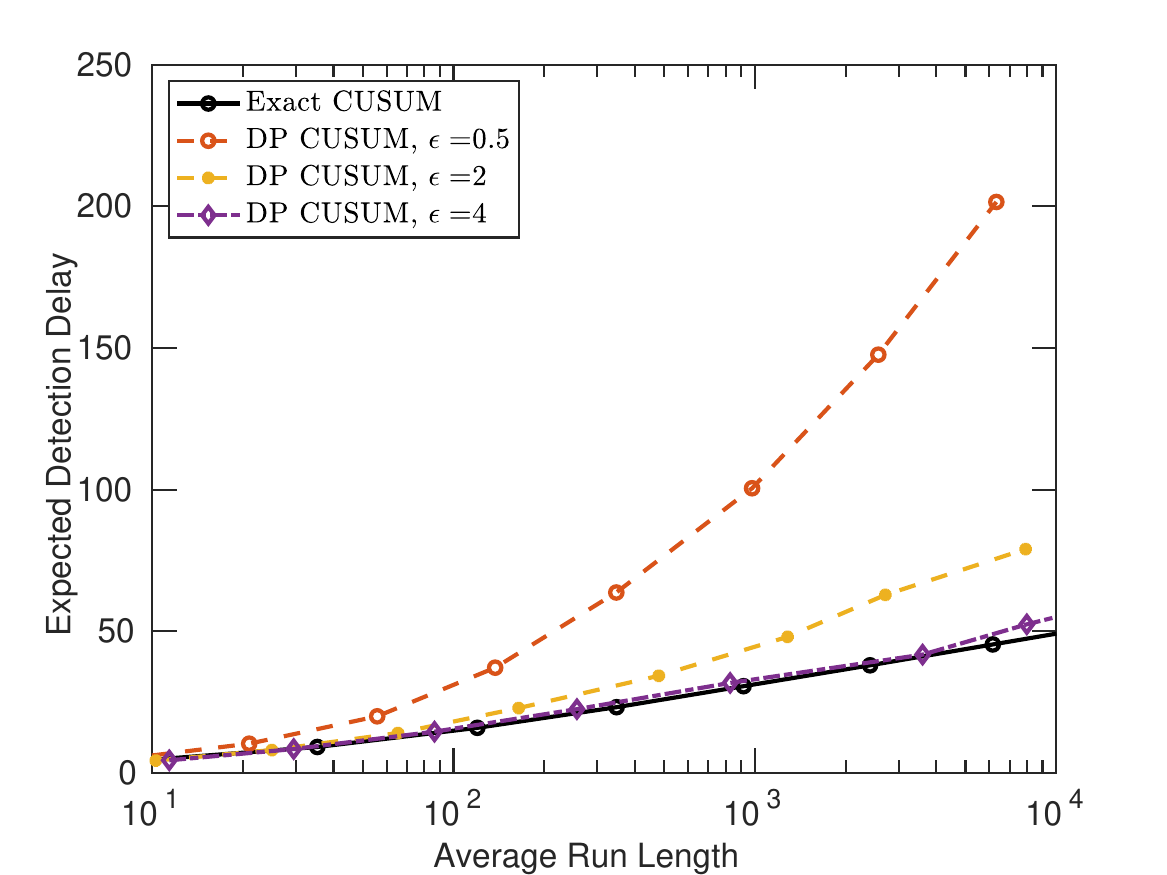} \\
       \small{(a) $N(0,1)$ $\to$ $N(0.1,1)$}   &  \small{(b) $N(0,1)$ $\to$ $N(0.5,1)$}
    \end{tabular}
    \caption{Average detection delay versus average run length of the DP-CUSUM procedure under Normal distributions at various privacy levels for: (a) mean shift from 0 to 0.1; (b) mean shift from 0 to 0.5. The average run length and detection delay are averaged over 10,000 trials.}
    \label{fig:normal-edd}
\end{figure}

\begin{figure}[!t]
    \centering
    \begin{tabular}{cc}
      \includegraphics[width=0.48\linewidth]{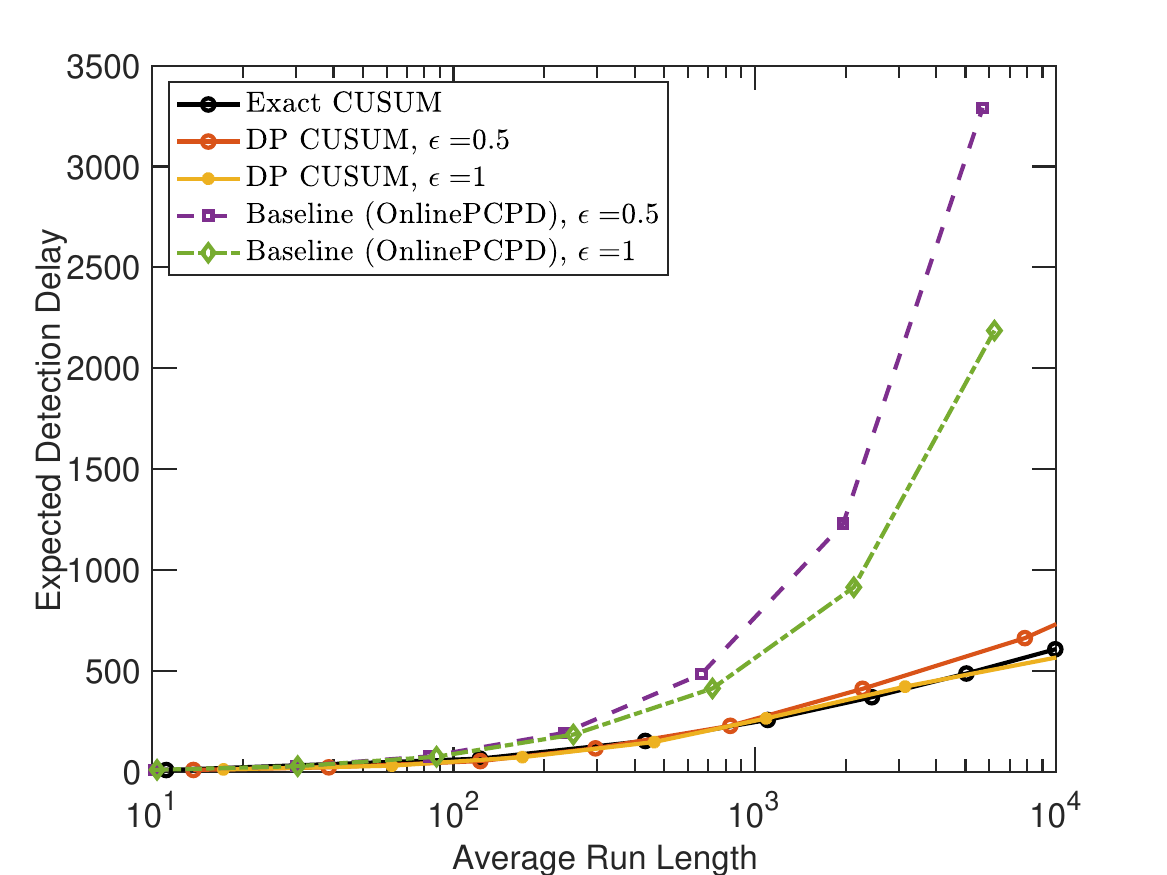}   & \includegraphics[width=0.48\linewidth]{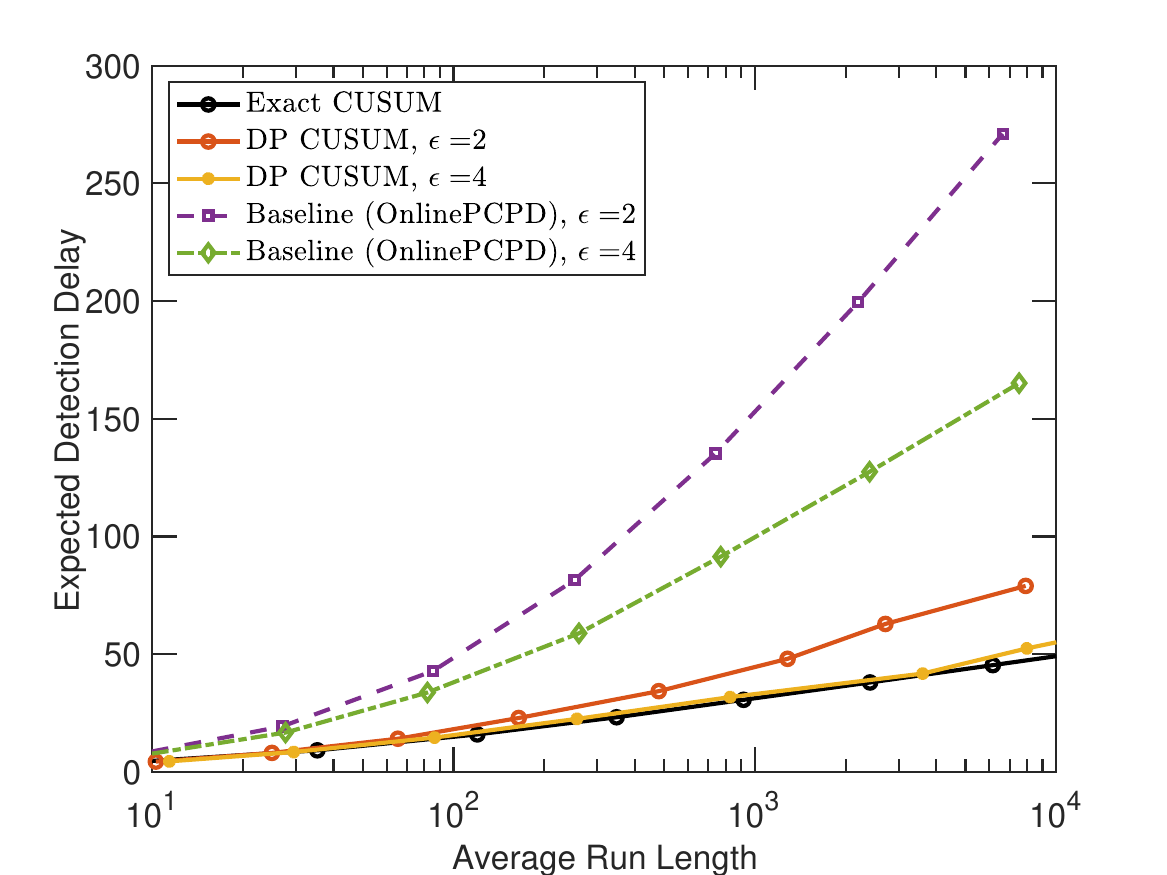} \\
       \small{(a) $N(0,1)$ $\to$ $N(0.1,1)$}   &  \small{(b) $N(0,1)$ $\to$ $N(0.5,1)$} 
    \end{tabular}
    \caption{Comparison of average detection delay of the DP-CUSUM procedure and the baseline method \cite{cummings2018differentially} under Normal distributions with different privacy levels for: (a) mean shift from 0 to 0.1; (b) mean shift from 0 to 0.5. The average run length and detection delay are averaged over 10,000 trials. Window size in OnlinePCPD is set as 700 according to \cite{cummings2018differentially}.}
    \label{fig:normal-edd-baseline}
\end{figure}

\section{Conclusion}
We introduce a theoretically grounded DP-CUSUM procedure for sequential change detection with differential privacy constraints. We derive the lower bound to the average run length and the upper bound to the worst-case average detection delay of DP-CUSUM. The theoretical results also yield the first-order asymptotic optimality of DP-CUSUM under weak privacy requirements. Experimental results on synthetic data demonstrate the good performance of our DP-CUSUM procedure compared with the baseline method and demonstrate our theoretical findings. 
Our theoretical analysis provides analytical techniques for future extensions and the design of DP-CUSUM variants across diverse practical applications. Future work includes adapting this framework to more complex settings, such as unknown post-change distributions, multi-sensor detection with controlled sensing, network-structured data, and related applications.

\section*{Acknowledgments}
The authors would like to thank the two anonymous reviewers and the Associate Editor for their insightful comments and constructive suggestions, which have greatly improved the clarity and quality of this work.

\appendices

\section{Useful Lemmas}\label{app:lemmas}

\begin{lemma}\label{lem:sensi}
For a pair of neighboring sequence $X_{(1:t)}$ and $X'_{(1:t)}$ that are different in one element only, denote $S_t$ and $S'_t$ as the CUSUM statistics at time $t$ based on the data sequence $X_{(1:t)}$ and $X'_{(1:t)}$, respectively. Let $\Delta_t=\max_{X_{(1:t)},X'_{(1:t)}}|S_t-S'_t|$. Then we have $\Delta_t\le \Delta$ for any $t\ge 1$.    
\end{lemma}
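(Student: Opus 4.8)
The plan is to exploit the recursive structure of the CUSUM statistic together with the fact that the clipping map $x \mapsto \max(0,x)$ is a contraction. Let $k$ be the unique index at which the two sequences differ, so that $X_j = X'_j$ for all $j \neq k$ while $X_k \neq X'_k$. I would then track the discrepancy $D_t := |S_t - S'_t|$ in three regimes: before, at, and after the differing index $k$.

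First I would handle the trivial regime $t < k$. Since the recursion $S_t = \max(0,S_{t-1}) + \ell(X_t)$ with $S_0 = 0$ depends only on $X_1,\ldots,X_t$, and these agree for both sequences when $t < k$, a straightforward induction gives $S_t = S'_t$, hence $D_t = 0 \le \Delta$. Next, at the index $t = k$ I would use $S_{k-1} = S'_{k-1}$ to write $S_k - S'_k = \big(\max(0,S_{k-1}) + \ell(X_k)\big) - \big(\max(0,S_{k-1}) + \ell(X'_k)\big) = \ell(X_k) - \ell(X'_k)$, so that $D_k = |\ell(X_k) - \ell(X'_k)| \le \Delta$ by the very definition of the sensitivity $\Delta$ in Definition~\ref{def:sensi}.

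For the final regime $t > k$, the key observation is that the incremented term is now shared: since $X_t = X'_t$, we have $\ell(X_t) = \ell(X'_t)$, so $S_t - S'_t = \max(0,S_{t-1}) - \max(0,S'_{t-1})$. Invoking the elementary inequality $|\max(0,a) - \max(0,b)| \le |a-b|$ (the map $x\mapsto\max(0,x)$ is $1$-Lipschitz), I obtain the contraction bound $D_t \le D_{t-1}$ for all $t > k$. Chaining this from $t$ down to $k$ yields $D_t \le D_k \le \Delta$, which combined with the two earlier regimes establishes $D_t \le \Delta$ for every $t \ge 1$, as claimed.

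I do not anticipate a genuine obstacle here; the only substantive ingredient is the $1$-Lipschitz (contraction) property of the clipping operation $x\mapsto\max(0,x)$, which guarantees that the single-entry perturbation introduced at time $k$ can never be amplified by subsequent CUSUM updates. The remaining work is the clean bookkeeping of the three regimes and a short induction, so I would present the argument concisely rather than belabor the routine steps.
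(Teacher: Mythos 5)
Your proposal is correct and follows essentially the same argument as the paper: both rest on the sensitivity bound $|\ell(X_k)-\ell(X'_k)|\le\Delta$ at the differing index and the $1$-Lipschitz (contraction) property of $x\mapsto\max(0,x)$ to show the discrepancy is never amplified afterward. Your three-regime bookkeeping is just a slightly more explicit phrasing of the paper's induction on $\Delta_t\le\max(\Delta,\Delta_{t-1})$.
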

\begin{proof}
 If $X_{(1:t)}$ and $X'_{(1:t)}$ are different at time $t,$ i.e., $X_1=X'_1, \ldots, X_{t-1}=X'_{t-1},$ but $X_t\neq X'_t,$ then we have $|S_t-S'_t|\le \Delta$ for any pair of $X_t\neq X'_t$ by Definition~\ref{def:sensi}. If $X_{(1:t)}$ and $X'_{(1:t)}$ are different at time before $t$, then we have $X_t=X'_t$ and by definition of CUSUM statistics,
 \begin{align*}
 & |S_t-S'_t| \\  & =  |\max\left(0,S_{t-1}\right) + \ell(X_t) - \max\left(0,S'_{t-1}\right) - \ell(X_t)| \\
  &\le  |S_{t-1}-S'_{t-1}| \le  \Delta_{t-1}.   
 \end{align*}
 Therefore, we have $\Delta_t\le \max (\Delta, \Delta_{t-1}).$ Moreover, it is easy to see $\Delta_1=\Delta,$ which implies $\Delta_t\le \Delta$ for any $t\ge 1$.    
\end{proof}

\begin{lemma}\label{lem:mei}\cite{mei2005information,Siegmund1985}
For the CUSUM statistics $S_t$ in \eqref{eq:1}, under the pre-change measure we have 
\begin{align*}
\Pro_\infty(S_t \ge b) \le e^{-b}, \quad \forall t\in\mathbb{N}, \, \forall b \in \mathbb{R}^{+}. 
\end{align*}
\end{lemma}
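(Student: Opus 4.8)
The plan is to recognize $S_t$ as the running maximum of a random walk and then apply Doob's maximal inequality to an exponential martingale. First I would write $S_t = \max_{1\le k\le t}\Lambda_k^t$, where $\Lambda_k^t = \sum_{j=k}^t \ell(X_j)$. Under $\Pro_\infty$ the increments $\ell(X_j)$ are i.i.d.\ (all $X_j\sim f_0$), so the family of suffix-sums $\{\Lambda_k^t\}_{k=1}^t$ has the same joint law as the family of prefix-sums obtained by time reversal. Concretely, setting $U_i = \ell(X_{t+1-i})$ gives $\Lambda_k^t = \sum_{i=1}^{t+1-k} U_i$, and as $k$ ranges over $\{1,\dots,t\}$ the upper limit $t+1-k$ ranges over $\{1,\dots,t\}$; hence $S_t \stackrel{d}{=} \max_{1\le m\le t} V_m$, where $V_m = \sum_{i=1}^m \ell(X_i)$ is a random walk started at $V_0=0$ with i.i.d.\ increments distributed as $\ell(X)$, $X\sim f_0$. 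This step turns the ``maximum over change-point locations'' into a ``maximum over partial sums,'' which is the only genuinely nonobvious manipulation.

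Next I would exploit the likelihood-ratio structure. Since $e^{\ell(X)} = f_1(X)/f_0(X)$ and $\Exp_{f_0}[f_1(X)/f_0(X)] = \int f_1 = 1$, each factor $e^{U_i}$ has mean one under $\Pro_\infty$. Therefore $L_m := e^{V_m} = \prod_{i=1}^m e^{\ell(X_i)}$ is a product of independent mean-one nonnegative factors, i.e.\ a nonnegative martingale with $\Exp_\infty[L_m] = 1$ for every $m$, starting from $L_0 = 1$.

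Finally I would apply Doob's maximal inequality for the nonnegative submartingale $\{L_m\}$: for any $\lambda>0$,
\[
\Pro_\infty\Big(\max_{0\le m\le t} L_m \ge \lambda\Big) \le \frac{\Exp_\infty[L_t]}{\lambda} = \frac{1}{\lambda}.
\]
Taking $\lambda = e^b$ with $b\in\Real^+$ and using monotonicity of the exponential, the event $\{\max_{1\le m\le t} V_m \ge b\}$ is contained in $\{\max_{0\le m\le t} L_m \ge e^b\}$, so
\[
\Pro_\infty(S_t \ge b) = \Pro_\infty\Big(\max_{1\le m\le t} V_m \ge b\Big) \le e^{-b},
\]
which is the claimed bound (uniform in $t$). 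The only point requiring care is justifying the distributional identity $S_t \stackrel{d}{=} \max_{1\le m\le t} V_m$ via time reversal; once the problem is cast as a running maximum, the exponential-martingale/Doob argument is routine, and no union bound over $k$ (which would cost an unwanted factor of $t$) is needed.
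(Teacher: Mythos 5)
Your proof is correct and complete. The paper itself does not supply an argument for this lemma---it defers entirely to the citation (Lemma 3 of Mei, 2005)---and the argument you give (time reversal of the suffix sums under the i.i.d.\ pre-change measure, followed by Doob's/Ville's maximal inequality applied to the mean-one likelihood-ratio martingale $L_m=\prod_{i\le m} f_1(X_i)/f_0(X_i)$) is exactly the standard route taken in that reference, and you correctly identify that a naive union bound over the changepoint index $k$ would lose a factor of $t$. The only pedantic caveat: if $f_1$ puts mass where $f_0=0$, then $\Exp_{f_0}[f_1(X)/f_0(X)]=\int_{\{f_0>0\}}f_1\le 1$, so $\{L_m\}$ is in general a nonnegative supermartingale rather than a martingale; the maximal inequality then gives $\Pro_\infty(\max_{0\le m\le t}L_m\ge\lambda)\le \Exp[L_0]/\lambda=1/\lambda$ all the same, so the stated bound is unaffected.
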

\begin{proof} The proof can be found in \cite[Lemma 3]{mei2005information} and we omit here. 
\end{proof}

\begin{corollary}\label{cor:cusumMGF}
For the CUSUM statistics $S_t$ in \eqref{eq:1}, under the pre-change measure we have
\begin{equation}\label{eq:cusumMGF}
    \Exp_\infty[e^{\lambda S_t}] \le \frac{1}{1-\lambda}, \quad \forall t\in\mathbb{N}, \, \forall \lambda \in (0,1).
\end{equation}
\end{corollary}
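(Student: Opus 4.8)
The plan is to recover the moment generating function bound from the exponential tail bound in the preceding lemma via the standard layer-cake (Fubini) identity. Since $e^{\lambda S_t}$ is a nonnegative random variable, I would write
$$\Exp_\infty[e^{\lambda S_t}] = \int_0^\infty \Pro_\infty(e^{\lambda S_t} > s)\, ds,$$
and then rewrite the integrand as $\Pro_\infty(S_t > \tfrac{1}{\lambda}\log s)$; the point of this substitution is to translate the level $s$ into a threshold $b = \tfrac{1}{\lambda}\log s$ to which the preceding tail estimate applies.

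Next I would split the integral at $s = 1$. For $s \in (0,1]$ the corresponding threshold $\tfrac{1}{\lambda}\log s$ is nonpositive, so the tail bound is vacuous there and I simply bound the probability by $1$, contributing at most $\int_0^1 1\, ds = 1$. For $s > 1$ the threshold $b = \tfrac{1}{\lambda}\log s$ is strictly positive, so the preceding lemma gives $\Pro_\infty(S_t > b) \le \Pro_\infty(S_t \ge b) \le e^{-b} = s^{-1/\lambda}$.

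It then remains to integrate the tail over $(1,\infty)$, namely
$$\int_1^\infty s^{-1/\lambda}\, ds = \frac{\lambda}{1-\lambda},$$
and adding the two pieces yields $\Exp_\infty[e^{\lambda S_t}] \le 1 + \tfrac{\lambda}{1-\lambda} = \tfrac{1}{1-\lambda}$, as claimed.

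The only delicate points, rather than genuine obstacles, are to handle the sign issue — $S_t$ can be negative, so the trivial bound on $(0,1]$ is needed to avoid invoking the tail estimate outside its validity range $b>0$ — and to observe that the improper integral $\int_1^\infty s^{-1/\lambda}\, ds$ converges precisely because $1/\lambda > 1$, which is exactly the hypothesis $\lambda \in (0,1)$. The passage from $\{S_t > b\}$ to $\{S_t \ge b\}$ is harmless since we only need an upper bound, and uniformity in $t$ is inherited for free because the underlying tail bound holds for all $t$ with a $t$-independent constant.
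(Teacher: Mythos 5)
Your proof is correct and follows essentially the same route as the paper: both arguments apply the layer-cake identity to $e^{\lambda S_t}$, split the integral at the level corresponding to threshold $0$, use the trivial bound $\Pro_\infty(S_t\ge b)\le 1$ below it and the tail bound $\Pro_\infty(S_t\ge b)\le e^{-b}$ above it, and evaluate the resulting integrals to get $1+\frac{\lambda}{1-\lambda}=\frac{1}{1-\lambda}$. The only cosmetic difference is that the paper performs the change of variables $y=e^{\lambda u}$ before integrating, whereas you integrate $s^{-1/\lambda}$ directly; the computations are identical.
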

\begin{proof}
From Lemma \ref{lem:mei}, $\Pro_\infty(S_t \ge b) \le e^{-b}$, we then have for any $t=1,2,\ldots,$ and any $0 < \lambda < 1$, 
\begin{align*}
& \Exp_\infty[e^{\lambda S_t}] \\ 
& =  \int_0^\infty \Pro_\infty(e^{\lambda S_t}\ge y)dy  = \int_0^\infty \Pro_\infty( S_t\ge \frac{\log y}{\lambda})dy \\
 &\overset{(i)}{=}  \int_{-\infty}^\infty \Pro_\infty( S_t\ge u) \lambda e^{\lambda u} du  \\
 & = \int_{-\infty}^0 \Pro_\infty( S_t\ge u) \lambda e^{\lambda u} du + \int_{0}^\infty \Pro_\infty( S_t\ge u) \lambda e^{\lambda u} du \\
& \overset{(ii)}{\le} \int_{-\infty}^0 \lambda e^{\lambda u} du + \int_{0}^\infty e^{-u} \lambda e^{\lambda u} du \\
& = 1 + \frac{\lambda}{1-\lambda} = \frac{1}{1-\lambda},
\end{align*}
where $(i)$ by change of variables $y=e^{\lambda u}$, and $(ii)$ by $\Pro_\infty( S_t\ge u) \le 1$ and  $\Pro_\infty(S_t \ge u) \le e^{-u}$.
   
\end{proof}

\section{Proofs of main theorems}\label{app:proofs}

\begin{proof}[Proof of Theorem \ref{thm:dp}]
We follow the proof of AboveThreshold in \cite{dwork2014algorithmic}. 
For any given $t\in\mathbb N$, we denote $\{S_k\}_{1\le k \le t}$ and $\{S'_k\}_{1\le k \le t}$ as the series of CUSUM statistics based on the data sequence $X_{(1:t)}$ and $X'_{(1:t)}$, respectively. Here, $X_{(1:t)}$ and $X'_{(1:t)}$ are neighboring sequences that are different in one element only. By definition \eqref{eq:dp-cusum-stat}, the DP-CUSUM statistics is $\{\tilde S_k = S_k + Z_k\}_{1\le k \le t}$ for the data sequence $X_{(1:t)}$ and $\{\tilde S'_k = S'_k + Z_k\}_{1\le k \le t}$ for the data sequence $X'_{(1:t)}$, where $Z_1,\ldots,Z_t$ are i.i.d. Laplace noise. The threshold is $b+W$ with Laplace noise $W$.

In the following, we fix the values of Laplace noise $Z_1,\dots,Z_{t-1}$ and only take probabilities over the randomness of the newly added noise $Z_t$ and threshold noise $W$. For any sequence $X_{(1:t)}$, we have:
\begin{equation}\label{eq:dp-proof-eq1}
\begin{aligned}
& \Pro_{Z_t,W}(\tilde{T} =t \mid X_{(1:t)}) \\
& \overset{(i)}{=} \Pro_{Z_t,W}(\max_{1\le j\le t-1}{\tilde{S_j}}<b+W,\tilde{S}_t\ge b+W\mid X_{(1:t)} )  \\
& \overset{(ii)}{=} \Pro_{Z_t,W}(\max_{1\le j\le t-1}{\tilde{S_j}}-b<W \leq {S}_t + Z_t -b\mid X_{(1:t)} )   \\
& \overset{(iii)}{=} \int_{-\infty}^\infty  \int_{-\infty}^{\infty} f_{Z_t}(z) f_W(w) \\
& \hspace{50pt} \cdot \1[w \in (\max_{1\le j\le t-1}{\tilde{S_j}}-b, {S}_t + z -b]]\, dw dz, 
\end{aligned} 
\end{equation}
where $(i)$ is by definition of stopping time $\tilde T = \tilde T(b)$ in \eqref{eq:dp-cusum-stop}, $(ii)$ is by the definition of private detection statistics $\tilde S_t = S_t + Z_t$, and $(iii)$ is by substituting the pdf $f_{Z_t}(\cdot)$ of $Z_t \sim \text{Lap}(\frac{2\Delta}{\epsilon})$ and pdf $f_{W}(\cdot)$ of $W \sim \text{Lap}(\frac{2\Delta}{\epsilon})$. 

For the neighboring sequence $X'_{(1:t)}$ that differs from $X_{(1:t)}$ in at most one entry, we make a change of variable as follows:
\begin{align*}
\hat z &= z + \max_{1\le j\le t-1}\tilde{S'_j} - \max_{1\le j\le t-1}\tilde{S_j} + S_t - S_t':=z+\delta_0, \\
\hat w &= w + \max_{1\le j\le t-1}\tilde{S'_j} - \max_{1\le j\le t-1}\tilde{S_j}:=w+\delta_1.
\end{align*}
Assume $X_{(1:t)}$ and $X'_{(1:t)}$ are different at time $k$ only, i.e., $X_1=X'_1, \ldots, X_{k-1}=X'_{k-1},X_{k+1}=X'_{k+1},\ldots,X_t = X'_t$, but $X_k\neq X'_k$. Then we have $S_1=S'_1, \ldots, S_{k-1}=S'_{k-1},$ but $S_k\neq S'_k$. Without loss of generality, we assume $S_k > S'_k$. 
By the definition of CUSUM statistics \eqref{eq:1}, we then have $S_{k+1} \ge S'_{k+1}$, $\ldots$, $S_t \ge S'_t$. 
Since we fixed $Z_1,\dots,Z_{t-1}$, we have $\tilde S_1 = \tilde S'_1,\ldots, \tilde S_{k-1} = \tilde S'_{k-1}; \tilde S_k > \tilde S'_k, \tilde S_{k+1} \ge \tilde S'_{k+1}, \ldots, \tilde S_{t-1} \ge \tilde S'_{t-1}$. Therefore, we have $\max_{1\le j\le t-1}\tilde{S'_j} - \max_{1\le j\le t-1}\tilde{S_j}\le 0$. Moreover, since $\tilde S_{k}\ge \tilde S'_k$ for $k=1,\ldots,t-1$, we have 
\begin{align*}
& |\max_{1\le j\le t-1}\tilde{S'_j} - \max_{1\le j\le t-1}\tilde{S_j}| \le \max_{1\le j\le t-1} |\tilde{S'_j} -\tilde{S_j}|\\
& = \max_{1\le j\le t-1} |{S'_j} - {S_j}| \le \Delta,    
\end{align*}
by Lemma \ref{lem:sensi} in Appendix \ref{app:lemmas}; thus $|\hat w - w| \leq \Delta$. Meanwhile, we have $S_t - S_t' \in [0,\Delta]$. Combining these together, we have $|\hat z - z|\leq \Delta$ as well. 

We can continue to Eq. \eqref{eq:dp-proof-eq1} as:
\begin{align*} 
& \Pro_{Z_t,W}(\tilde{T} =t \mid X^{1:t})  \\
&\overset{(iv)}{=} \int_{-\infty}^\infty  \int_{-\infty}^{\infty} f_{Z_t}(\hat z-\delta_0) f_W( \hat w-\delta_1) \\
& \hspace{30pt} \cdot \1[\hat w-\delta_1   \in (\max_{1\le j\le t-1}{\tilde{S_j}}-b, {S}_t + \hat z-\delta_0 -b]]\, d\hat w d\hat z \\
&\overset{(v)}{\le } \int_{-\infty}^\infty  \int_{-\infty}^{\infty} e^{\epsilon/2}f_{Z_t}(\hat z) e^{\epsilon/2} f_W(\hat w) \\
& \hspace{70pt} \cdot \1[\hat w \in (\max_{1\le j\le t-1}\tilde{S'_j}-b, S_t' +\hat z    -b]]\, d\hat w d\hat z \\
& \overset{(vi)}{=} e^{\epsilon}\Pro_{Z_t,W}(\tilde{T} =t \mid X'_{(1:t)} ).
\end{align*}
Here $(iv)$ is by change of variables $\hat z = z+\delta_0$ and $\hat w = w + \delta_1$, $(v)$ is due to $|\delta_0|\le \Delta$, $|\delta_1|\le \Delta$, and if $Y\sim \text{Lap}(\frac{2\Delta}{\epsilon})$, then $\frac{f_Y(y)}{f_Y(y')} = e^{-\frac{\epsilon}{2\Delta} (|y|-|y'|)}\leq e^{\frac{\epsilon}{2\Delta} |y-y'|}$, and $(vi)$ is due to the same derivation as in the derivation of Eq. \eqref{eq:dp-proof-eq1}. 
This concludes the proof that $\tilde{T}$ is $\epsilon$-DP.
\end{proof}

\begin{proof}[Proof of Theorem \ref{thm:arl}]
We fix the threshold $b>0$ and write $\tilde T = \tilde T(b)$ for simplicity. Condition on $W=w$ for some fixed $w \ge 0$, we first compute $\Exp_{\infty}[\tilde{T}|W=w]$ and then apply the law of total expectation to compute $\Exp_{\infty}[\tilde{T}]$. For any $x>0,$ and any $\lambda \in (0,1),$ by Chebyshev’s inequality, 
\begin{align*}
& \Exp_\infty[\tilde{T} | W=w] \\
& \ge x \Pro_\infty(\tilde{T}\ge x | W=w) \\
&=x(1-\Pro_\infty(\tilde{T}< x | W=w))\\
&=x\left(1-\Pro_\infty(S_n+Z_n\ge b+w, \,\text{for some $1\le n\le x$})\right)\\
&\ge x(1-\sum_{n=1}^{\lfloor x \rfloor}\Pro_\infty(S_n+Z_n\ge b+w))\\
& \ge x(1-\sum_{n=1}^{\lfloor x \rfloor} e^{-\lambda (b+w)}\Exp_\infty[e^{\lambda Z_n}]\Exp_\infty[e^{\lambda S_n}]  )  \\
& \ge x(1-x e^{-\lambda (b+w)}\Exp_\infty[e^{\lambda Z}] \frac{1}{1-\lambda }  ),
\end{align*}
where the last inequality due to $\Exp_\infty[e^{\lambda S_n}]\le \frac{1}{1-\lambda}$ (as shown in Corollary\,\ref{cor:cusumMGF}) and $\lfloor x \rfloor \le x$.

Note that for any $u>0,$ the function $x(1-xu)$ is maximized at $x=1/(2u)$ with the maximum value $1/(4u)$. Thus, taking $x=1/(2e^{-\lambda (b+w)}\Exp_\infty[e^{\lambda Z}] \frac{1}{1-\lambda })$ yields,
\begin{align*}
\Exp_\infty[\tilde{T}|W=w]&\ge \frac{1}{4}\cdot (1-\lambda) \cdot \frac{e^{\lambda (b+w)}}{\Exp[e^{\lambda Z}]}.
\end{align*}
Since $Z \sim \text{Lap}(\frac{2\Delta}{\epsilon})$, we have for any $0<\lambda< \frac{\epsilon}{2\Delta}$, $\Exp[e^{\lambda Z}]=\frac{1}{1-4\Delta^2\lambda^2/\epsilon^2}$. Substituting this into the expression above, we obtain
\begin{align*}
\Exp_\infty[\tilde{T}|W=w]&\ge \frac{e^{\lambda (b+w)}}{4}(1-\lambda)(1-(\frac{2\Delta}{\epsilon}\lambda)^2) \\
& \ge \frac{e^{\lambda (b+w)}}{4}(1-\lambda)(1- \frac{2\Delta}{\epsilon}\lambda),
\end{align*}
where the last inequality is due to $\lambda < \frac{\epsilon}{2\Delta}$.

We then consider the following two cases.
\begin{enumerate}
    \item If $\epsilon \le 2\Delta$, we have $1-\lambda \ge 1- \frac{2\Delta}{\epsilon}\lambda$ and thus
\begin{align*}
\Exp_\infty[\tilde{T}|W=w]&\ge \frac{e^{\lambda (b+w)}}{4}(1- \frac{2\Delta}{\epsilon}\lambda)^2.
\end{align*}
 Taking the logarithm and differentiating with respect to $\lambda$, we find the unique stationary point of the right-hand side is $\lambda^*=\frac{\epsilon}{2\Delta}-\frac{2}{b+w}.$ Moreover, if $w> 4\Delta/\epsilon,$ we have $\lambda^*\in(0,\frac{\epsilon}{2\Delta}),$ which implies $\lambda^*$ is the maximizer of the right-hand side. Thus  
\begin{align*}
\Exp_\infty[\tilde{T}|W=w]&\ge e^{\frac{\epsilon}{2\Delta}(b+w)-2}(\frac{2\Delta}{(b+w)\epsilon})^2.
\end{align*}
Then we have
\begin{align*}
\Exp_{\infty}[\tilde{T}] & = \int_{-\infty}^{\infty} \Exp_\infty[\tilde{T}|W=w]f_W(w) dw \\
& \ge \int_{4\Delta/\epsilon}^{\infty} e^{\frac{\epsilon}{2\Delta}(b+w)-2}(\frac{2\Delta}{(b+w)\epsilon})^2 f_W(w) dw \\
& = e^{\frac{\epsilon}{2\Delta} b - 2} (\frac{2\Delta}{\epsilon})^2 \int_{{4\Delta/\epsilon}}^{\infty} e^{\frac{\epsilon}{2\Delta} w} \frac{1}{(b+w)^2} \frac{\epsilon}{4\Delta} e^{-\frac{\epsilon}{2\Delta}w} dw \\
& = \frac{\Delta}{\epsilon (b+4\Delta/\epsilon)} \cdot e^{\frac{\epsilon}{2\Delta} b - 2} \ge \frac{e^{\frac{\epsilon}{2\Delta} b - 2}}{2b+4} \ge \frac{e^{\frac{\epsilon}{2\Delta} b - 2}}{4(b+1)^2}.
\end{align*}

\item If $\epsilon > 2\Delta$, we have 
\begin{align*}
\Exp_\infty[\tilde{T}|W=w]&\ge \frac{e^{\lambda (b+w)}}{4}(1-\lambda)^2.
\end{align*}
Similarly, the unique stationary point of the right-hand side is $\lambda^*=1-2/(b+w).$ Moreover, if $b>2,$ we have $\lambda^*\in(0,\frac{\epsilon}{2\Delta}),$ which implies that $\lambda^*=1-2/(b+w)$ is also the maximizer of the right-hand side. Thus, 
\begin{align*}
\Exp_\infty[\tilde{T}|W=w]&\ge e^{b+w-2}\frac{1}{(b+w)^2}.
\end{align*}
Then we have
\begin{align*}
\Exp_{\infty}[\tilde{T}] & = \int_{-\infty}^{\infty} \Exp_\infty[\tilde{T}|W=w]f_W(w) dw \\
& \ge \int_{0}^{\infty} e^{b+w-2}\frac{1}{(b+w)^2} f_W(w) dw \\
& = e^{b-2} \int_{0}^{\infty} e^{w} \frac{1}{(b+w)^2} \frac{\epsilon}{4\Delta} e^{-\frac{\epsilon}{2\Delta}w} dw  \\
& \ge  \frac{\epsilon}{4\Delta} e^{b-2} \int_{0}^{\infty}  \frac{1}{(b+w)^2} e^{-\frac{\epsilon}{2\Delta}w} dw \\
& \ge  \frac{\epsilon}{4\Delta} e^{b-2} \int_{0}^{1}  \frac{1}{(b+w)^2} e^{-\frac{\epsilon}{2\Delta}w} dw \\
& \ge \frac{\epsilon}{4\Delta} e^{b-2} 
 \frac{1}{(b+1)^2} \int_{0}^{1} e^{-\frac{\epsilon}{2\Delta}w} dw \\
& = \frac{\epsilon}{4\Delta} e^{b-2} 
 \frac{1}{(b+1)^2} \frac{2\Delta  }{\epsilon}(1- e^{-\frac{\epsilon}{2\Delta}})  \\
 & \ge \frac12 e^{b-2} 
 \frac{1}{(b+1)^2} (1- e^{-1}) \ge \frac14 \frac{e^{b-2}}{(b+1)^2}.
\end{align*}
\end{enumerate}
Combining these two cases together completes the proof.
\end{proof}

\begin{proof}[Proof of Lemma \ref{lem:wadd0}]
Consider the changepoint at $\tau$. Set statistics $U_1=\cdots=U_\tau=-\min_{x}\ell(x)<0$, and $U_{t} = \max(0,U_{t-1}) + \ell(X_t)$ for $t>\tau$, and let $\tilde T_0(b)$ be the stopping time 
\[
\tilde T_0(b) := \inf\{t: U_t + Z_t \geq b+W\},
\]
where $Z_t\sim \text{Lap}(\frac{2\Delta}{\epsilon}), W\sim \text{Lap}(\frac{2\Delta}{\epsilon})$ are i.i.d. Laplace noise, the same as in the DP-CUSUM procedure. Then by definition we have $U_t \leq S_t$, $\forall t\in\mathbb{N}$, thus $\tilde T_0(b) \geq \tilde T(b)$. Moreover, we have
\begin{align*}
&\Exp_\tau[(\tilde T(b)-\tau)^{+}|\cF_\tau] \\ 
& = \Exp_\tau[(\tilde T(b)-\tau)\1(\tilde T(b)>\tau)|\cF_\tau] \\
& = \Exp_{Z_1,\ldots,Z_\tau,W} \Exp_\tau[(\tilde T(b)-\tau)\1(\tilde T(b)>\tau)|\cF_\tau, Z_1,...,Z_\tau,W] \\
& \overset{(i)}{\leq} \! \Exp_{Z_1,...,Z_\tau,W} \Exp_\tau[(\tilde T_0(b) \!-\!\tau)\1(\tilde T_0(b)\!>\!\tau)|\cF_\tau, Z_1,...,Z_\tau,W]  \\
& \overset{(ii)}{=} \! \Exp_{Z_1,\ldots,Z_\tau,W} \Exp_\tau[(\tilde T_0(b)-\tau)\1(\tilde T_0(b)>\tau)| Z_1,...,Z_\tau,W]  \\
& \overset{(iii)}{=} \Pro_{Z_1,\ldots,Z_\tau,W}(\tilde T_0(b)>\tau) \cdot \Exp_\tau[(\tilde T_0(b)-\tau)| \tilde T_0(b)>\tau] \\ 
& \le \Exp_\tau[\tilde T_0(b)-\tau| \tilde T_0(b)>\tau] \\
& = \Exp_0[\tilde T(b)].
\end{align*}
The inequality $(i)$ is due to $\tilde T_0(b) \geq \tilde T(b)$; the equality $(ii)$ is true because the stopping time $\tilde T_0$ does not employ any information from $\cF_\tau$, consequently it is independent from $\cF_\tau$; the equality $(iii)$ is due to the fact that $\1(\tilde T_0(b)>\tau)\in \sigma\{Z_1,\cdots, Z_{\tau}, W\}.$ The last equality is true because when $\tilde T_0(b) > \tau$,  statistically this is the same as starting at 0 with the change occurring at 0.
\end{proof}

\begin{proof}[Proof of Theorem \ref{thm:edd}]
Let the changepoint be $\tau=0$, i.e., all samples are drawn from the post-change distribution. Condition on $W=w$ for any $w \ge 0$, we denote $\tilde b := b+w>0$, define an alternative process $U_t = \sum_{i=1}^t \ell(X_i),t=1,2,\ldots$, with $U_0=0$, and define another stopping time $\nu$ by
\begin{align}\label{eq:new-stop}
\nu(\tilde{b})=\inf\{t\ge 1:U_t +Z_t\geq \tilde b\}.
\end{align}
Here $Z_t\sim \text{Lap}(\frac{2\Delta}{\epsilon})$ is the same Laplace noise as in \eqref{eq:dp-cusum-stat}.
Obviously we have $U_t \leq S_t$, thus $U_t+Z_t \leq S_t+Z_t=\tilde S_t$ and $\Exp_0[\tilde T(b)|W=w]\leq \Exp_0[\nu(\tilde{b})]$. In the following, we derive an upper bound for $\Exp_0[\nu(\tilde{b})]$. Note that for notational simplicity, we omit the conditioning on $W=w$ in the following proofs unless explicitly stated otherwise.

\noindent Step 1: We first show $\Exp_0[\nu(\tilde{b})] <\infty$ for any fixed $\tilde{b}>0$. Note that for any integer $t\ge1$ and $\lambda \in (0,\frac{\epsilon}{2\sqrt{2}\Delta}]$ we have
\begin{equation*}
\begin{aligned}
\Pro_0(\nu(\tilde{b}) \ge t+1)  & \le \Pro_0(U_t + Z_t < \tilde b ) \le \frac{\Exp_0[e^{-\lambda (U_t + Z_t)}]}{e^{-\lambda \tilde b}} \\
& = e^{\lambda \tilde b} \Exp_0[e^{-\lambda Z_t}] \Exp_0[e^{-\lambda (\sum_{i=1}^t \ell(X_i))}] \\
& = e^{\lambda \tilde b} \frac{1}{1-4\Delta^2\lambda^2/\epsilon^2} (\Exp_0[e^{-\lambda \ell(X_1)}])^t \\
&\le 2 e^{\lambda \tilde b} (\Exp_0[e^{-\lambda \ell(X_1)}])^t,    
\end{aligned}
\end{equation*}
where the last inequality is due to $1-4\Delta^2\lambda^2/\epsilon^2\ge \frac12$ when $\lambda \le \frac{\epsilon}{2\sqrt{2}\Delta}$. 
Recall that $\ell(x)$ is bounded with range $\Delta$, and $\Exp_0[\ell(X_1)]=I_0 >0$, thus by Hoeffding's inequality we have $
\Exp_0[e^{-\lambda \ell(X_1)}] \le e^{-\lambda I_0 + \frac{\lambda^2\Delta^2}{8}}$.
Substitute into the previous inequality, we have
\begin{align*}
\Pro_0(\nu(\tilde{b}) \ge t+1) & \le 2 e^{\lambda \tilde b} e^{-t (\lambda I_0 -  \frac{\lambda^2\Delta^2}{8})}. 
\end{align*}
Now take $\lambda =\min\{ \frac{4I_0}{\Delta^2}, \frac{\epsilon}{2\sqrt{2}\Delta}\}$, we have $C_1:=\lambda I_0 - \frac{\lambda^2\Delta^2}{8}>0$, and $\Pro_0(\nu(\tilde{b}) \ge t+1) \le 2 e^{\lambda \tilde b} e^{-t C_1}$. Thus we have $\Exp_0[\nu(\tilde{b})]=\sum_{t= 1}^{\infty}\Pro(\nu(\tilde{b})\ge t)<\infty$. 

\noindent Step 2: We derive an upper bound to the expected stopping time $\Exp_0[\nu(\tilde{b})]$ using Wald's equation. Since $\ell(X_1),\ell(X_2),\ldots$ are i.i.d. with $\Exp_0[\ell(X_1)]=I_0$, $U_t = \sum_{i=1}^t \ell(X_i)$, and $\nu(\tilde{b})$ is a stopping time with $\Exp_0[\nu(\tilde{b})]<\infty$ (as proved in Step 1), by Wald's equation \cite[Theorem 4.8.6]{durrett2019probability}, we have $\Exp_0[U_{\nu(\tilde{b})}] = \Exp_0[\nu(\tilde{b})] \Exp_0[\ell(X_1)]=\Exp_0[\nu(\tilde{b})]I_0$. For simplicity, we use $\nu$ to denote the stopping time $\nu(\tilde{b}).$ Then, we rewrite it as follows,
\begin{equation}\label{eq:edd-wald}
\begin{aligned}
\Exp_0[\nu] & = \frac{\Exp_0[U_{\nu}]}{I_0} =  \frac{\Exp_0[U_{\nu} + Z_{\nu} - \tilde b] + \Exp_0[\tilde b - Z_{\nu}]}{I_0} \\
& = \frac{\tilde b + \Exp_0[U_{\nu} + Z_{\nu} - \tilde b] + \Exp_0[ - Z_{\nu}]}{I_0},        
\end{aligned}
\end{equation}
where in the numerator, the first expectation is for the so-called overshoot $U_{\nu} + Z_{\nu} - \tilde b \ge 0$, and the second expectation is for the added Laplace noise $Z_\nu$ at the stopping time.

We assume $Z_0=0$. Note that by definition of $\nu$, $U_{\nu-1} + Z_{\nu-1} < \tilde b$, thus the overshoot is upper-bounded as follows
\begin{align*}
U_{\nu} + Z_{\nu}-\tilde b & = U_{\nu-1}+\ell(X_{\nu})  +Z_{\nu} - \tilde b +Z_{\nu-1}-Z_{\nu-1} \\
& =U_{\nu-1} + Z_{\nu-1}-\tilde b+\ell(X_{\nu})  +Z_{\nu}-Z_{\nu-1} \\
& \le \Delta+Z_{\nu} -Z_{\nu-1}.
\end{align*}
Substitute into Eq.~\eqref{eq:edd-wald}, we have 
\begin{equation}\label{eq:nu-upper}
\begin{aligned}
\Exp_0[\nu] & \le \frac{\tilde b + \Delta+ \Exp_0[Z_\nu - Z_{\nu-1}] +\Exp_0[-Z_\nu]}{I_0} \\
& = \frac{\tilde b + \Delta+ \Exp_0[-Z_{\nu-1}]}{I_0} \le \frac{\tilde b + \Delta+ \Exp_0[|Z_{\nu-1}|]}{I_0}.    
\end{aligned}    
\end{equation}

\noindent Step 3: We now derive an upper bound for $\Exp_0[|Z_{\nu-1}|]$. 
Let $m_1=\lfloor \frac{2\tilde b}{I_0}\rfloor+1$. We have
\begin{equation}\label{eq:all}
\begin{aligned}
& \Exp_0[|Z_{\nu-1}|]= \sum_{i=1}^{\infty}\Exp_0|Z_i \1(\nu=i+1)|\\
&\le \underbrace{\sum_{i=1}^{m_1}\Exp_0|Z_i \1(\nu=i+1)|}_{\text{Part I}} \\
& \hspace{20pt} +\underbrace{\sum_{i=m_1+1}^{\infty}\Exp_0\left(|Z_i| \1(\sum_{j=1}^{i}\ell(X_j)+Z_{i} < \tilde b)\right)}_{\text{Part II}}. 
\end{aligned}
\end{equation}

For Part I, note 
\begin{align}
\text{Part I}&\le \sum_{i=1}^{m_1}(\Exp_0|Z_i|^2)^{1/2}\sqrt{\Pro_0(\nu=i+1)} \notag\\
&\le 2\sqrt{2}(\frac{\Delta}{\epsilon})\left(\sum_{i=1}^{m_1}\sqrt{\Pro_0(\nu=i+1)}\right) \notag \\
& \le 2\sqrt{2}(\frac{\Delta}{\epsilon})\sqrt{(\sum_{i=1}^{m_1} 1) (\sum_{i=1}^{m_1} \Pro_0(\nu=i+1))} \notag \\
& \le 2\sqrt{2}(\frac{\Delta}{\epsilon})\sqrt{m_1} \notag \\
& \le 2\sqrt{2}(\frac{\Delta}{\epsilon})\sqrt{\frac{2\tilde b}{I_0}+1} \le 2\sqrt{2}(\frac{\Delta}{\epsilon})(\sqrt{\frac{2\tilde b}{I_0}} + 1) \notag \\
& = 4\frac{\Delta}{\epsilon} \sqrt{\frac{\tilde b}{I_0}} +2\sqrt{2}\frac{\Delta}{\epsilon} .\label{eq:part3}
\end{align}

For Part II, by Cauchy-Schwarz inequality, and for any $\lambda \in (0,\frac{\epsilon}{2
\sqrt{2}\Delta}]$, we have
\begin{align*}
&\Exp_0\left(|Z_{i}|\1(\sum_{j=1}^{i}\ell(X_j)+Z_{i}< \tilde b)\right) \\
& \le \left(\Exp_0|Z_{i}|^2\right)^{1/2}\sqrt{\Pro_0(\sum_{j=1}^{i}\ell(X_j)+Z_{i}< \tilde b)}\\
& \le 2\sqrt{2}(\frac{\Delta}{\epsilon}) \sqrt{\frac{\Exp_0 e^{-\lambda\left(\sum_{j=1}^{i}\ell(X_j)+Z_{i}\right)}}{e^{-\lambda \tilde b}}} \\
& \le 2\sqrt{2}(\frac{\Delta}{\epsilon}) \sqrt{\frac{e^{(\frac{\lambda^2\Delta^2}{8} - I_0 \lambda)i+\lambda \tilde b}}{1-4\Delta^2\lambda^2/\epsilon^2}} \le 4(\frac{\Delta}{\epsilon}) e^{\frac12\lambda \tilde b}e^{-\frac12(I_0 \lambda - \frac{\lambda^2\Delta^2}{8})i},
\end{align*}
where the last inequality is due to $1-4\Delta^2\lambda^2/\epsilon^2\ge \frac12$ when $\lambda \le \frac{\epsilon}{2\sqrt{2}\Delta}$. Taking $\lambda=\bar \lambda=\min\{4I_0/\Delta^2, \epsilon/(2\sqrt{2}\Delta)\}$ guarantees $(I_0\bar \lambda-\frac{{\bar \lambda}^2 \Delta^2}{8})>0$ and thus, 
\begin{align}
\text{Part II} & 
\le \frac{4\Delta}{\epsilon} e^{\frac12\bar \lambda \tilde b} \sum_{i=m_1+1}^\infty e^{-\frac12(I_0 \bar\lambda - \frac{{\bar\lambda}^2\Delta^2}{8})i} \notag \\
&
\overset{(1)}{\le } \frac{4\Delta}{\epsilon} e^{\frac12\bar \lambda \tilde b} \frac{1}{\frac12(I_0 \bar\lambda - \frac{{\bar\lambda}^2\Delta^2}{8})} e^{-\frac12m_1(I_0 \bar\lambda - \frac{{\bar\lambda}^2\Delta^2}{8})} \notag \\
& \overset{(2)}{\le} 
\frac{8\Delta}{\epsilon(I_0 \bar\lambda - \frac{{\bar\lambda}^2\Delta^2}{8})} 
e^{-\frac12\tilde b \bar\lambda (1 - \frac{{\bar\lambda}\Delta^2}{4I_0})} \le \frac{8\Delta}{\epsilon(I_0 \bar\lambda - \frac{{\bar\lambda}^2\Delta^2}{8})}, \label{eq:part2}
\end{align}
where $(1)$ is due to $\sum_{i=m_1+1}^{\infty} e^{-Ci} = \frac{e^{-C(m_1+1)}}{1-e^{-C}} \le \frac{e^{-C(m_1+1)}}{C/(1+C)} \le  \frac{e^{-Cm_1}}{C}$ for any $C>0$, and $(2)$ is due to $m_1 \ge 2\tilde b/I_0$.

\noindent Step 4: Finally, we substitute the upper bound for $\Exp_0[|Z_{\nu-1}|]$ as derived in Step 3 into Eq.~\eqref{eq:nu-upper} to complete the proof. Substituting the upper bounds in \eqref{eq:part3} and \eqref{eq:part2} into Eq.~\eqref{eq:all} and \eqref{eq:nu-upper}, we have for any $\tilde{b}>0$,
\begin{equation*}\label{eq:re}
\begin{aligned}
\Exp_0[\nu(\tilde{b})] & \leq \frac{\tilde b  + 4\frac{\Delta}{\epsilon} \sqrt{\frac{\tilde b}{I_0}} +2\sqrt{2}\frac{\Delta}{\epsilon}  + \frac{8\Delta}{\epsilon(I_0 \bar\lambda - \frac{{\bar\lambda}^2\Delta^2}{8})}}{I_0} \\
& = \frac{\tilde b  +  {4\frac{\Delta}{\sqrt{I_0}\epsilon}} \sqrt{\tilde b} + C}{I_0},    
\end{aligned}
\end{equation*}
where $C$ is a constant that depends only on $\epsilon$, $\Delta$, and $I_0$, but is independent of $\tilde{b}$.

Above we have shown the upper bound for $\Exp_0[\nu(b+w)|W=w]$ for any $b\ge 0$ when $w\ge 0$. Note that when $w<0$, we have $\Exp_0[\nu(b+w)|W=w] \le \Exp_0[\nu(b+|w|)|W=|w|]$. Using $\sqrt{b+|w|} \leq \sqrt{b} + \sqrt{|w|}$,  we have
\begin{align*}
    \Exp_0[\tilde T(b)] & = \Exp_W\Exp_0 [\nu(b+w)| W=w] \\
    & \leq \Exp_W\Exp_0 [\nu(b+|w|)| W=w] \\
    &\le\Exp_W [\frac{b+ |W| + 4\frac{\Delta}{\sqrt{I_0}\epsilon} (\sqrt{b} + \sqrt{|W|})+C}{I_0}] \\
    & = \frac{b}{I_0} +  \frac{\Exp_W|W|}{I_0} + \frac{4\Delta}{I_0^{3/2}\epsilon}(\sqrt{b}+\Exp_W[\sqrt{|W|}]) +C'
    \\
    &= \frac{b}{I_0} + \frac{4\Delta}{I_0^{3/2}\epsilon}\sqrt{b}+ C'',
\end{align*}
where $C'$ and $C''$ are both constants independent of $b$. 

\end{proof}

\begin{proof}[Proof of Theorem \ref{thm4}]
Similar to the proof in Theorem  \ref{thm:dp}, for $t\in\mathbb N$ and for any sequence $X_{(1:t)}$ and $X'_{(1:t)}$ that differs only at time $k$, we have $S_1=S'_1, \ldots, S_{k-1}=S'_{k-1},$ but $S_k\neq S'_k$. Without loss of generality, we assume $S_k > S'_k$. By the definition of CUSUM statistics \eqref{eq:1}, we then have $S_{k+1} \ge S'_{k+1}$, $\ldots$, $S_t \ge S'_t$. 
Since we fixed $Z_1,\dots,Z_{t-1}$, we have $\tilde S_1 = \tilde S'_1,\ldots, \tilde S_{k-1} = \tilde S'_{k-1}; \tilde S_k > \tilde S'_k, \tilde S_{k+1} \ge \tilde S'_{k+1}, \ldots, \tilde S_{t-1} \ge \tilde S'_{t-1}$. 

Denote the event 
\[
\mathcal{E}_\delta:=\{|\ell(X_k)|\le \frac{A_\delta}{2} \text{ and }|\ell(X'_k)|\le \frac{A_\delta}{2}\}.
\]
Note that conditioning on event $\mathcal{E}_\delta$, we have $|\tilde S_k - \tilde S'_k| \le A_\delta$ and thus $|\tilde S_j - \tilde S'_j|\le A_\delta$ for $j>k$. Therefore, we have $\max_{1\le i\le t-1}\tilde{S'_j} - \max_{1\le i\le t-1}\tilde{S_j}\le 0$ and $|\max_{1\le i\le t-1}\tilde{S'_j} - \max_{1\le i\le t-1}\tilde{S_j}|\le A_\delta$. Meanwhile, we have $S_t - S_t' \in [0,A_\delta]$. By proof of Theorem \ref{thm:dp}, we have $\Pro_{Z_t,W}(\tilde{T} =t \mid X_{(1:n)})\le e^\epsilon \Pro_{Z_t,W}(\tilde{T} =t \mid X'_{(1:t)} )$ still hold, conditioning on $\mathcal{E}_\delta$.

By the definition of $A_\delta$, we have for any given $k$, under both pre- and post-change measure,
\begin{align*}
\Pro( \mathcal{E}_\delta ) \ge 1-\delta, \text{ i.e., }  \Pro( \mathcal{E}_\delta^c ) \le \delta. 
\end{align*}
Combining these, we have
\[
\begin{aligned}
& \Pro(\tilde{T} =t \mid X_{(1:t)\setminus k}) \\
& = \Pro(\tilde{T} =t \mid X_{(1:t) \setminus k}, \mathcal{E}_\delta)\Pro(\mathcal{E}_\delta) \\
& \hspace{10pt} + \Pro(\tilde{T} =t \mid X'_{(1:t)\setminus k}, \mathcal{E}_\delta^c)\Pro(\mathcal{E}_\delta^c) \\
 & \leq e^\epsilon \Pro(\tilde{T} =t \mid X'_{(1:t)\setminus k},\mathcal{E}_\delta)\Pro(\mathcal{E}_\delta) + \delta \\
 & \le e^\epsilon \Pro(\tilde{T} =t \mid X'_{(1:t)\setminus k}) + \delta.
\end{aligned}
\]
This concludes the proof of Theorem \ref{thm4}.
\end{proof}

\begin{proof}[Proof of Corollary \ref{thm:edd2}]
We first note that the proof of Lemma~\ref{lem:wadd0} remains valid, and thus the worst-case detection delay of $\tilde{T}$ is still attained when the changepoint occurs at $\tau = 0$. 
Furthermore, the proof of Theorem~\ref{thm:edd} does not explicitly rely on the boundedness of the log-likelihood ratio $\ell(\cdot)$; it only invokes the boundedness of $\ell(X)$ through the use of Hoeffding’s inequality. 
As a result, the same proof applies in the unbounded case provided that $\ell(X)$ is sub-Gaussian under both the pre- and post-change distributions. Specifically, assume there exists a constant $\sigma^2 > 0$ such that $\Exp[e^{\lambda (\ell(X) - \Exp[\ell(X)]) }] \le e^{ \frac{\lambda^2\sigma^2}{2}}$ for all $\lambda \in\mathbb R$. 
Under this condition, the same argument in the proof of Theorem~\ref{thm:edd} continues to hold with Laplace noise $\text{Lap}(2\Delta/\epsilon)$ replaced by $\text{Lap}(2A_\delta/\epsilon)$. The sub-Gaussian constant $\sigma^2$ appears only in the additive constant term $C$ in the resulting WADD bound. We omit the technical derivations, as they are essentially the same as the proof of Theorem~\ref{thm:edd}.
\end{proof}

\bibliographystyle{IEEEtran}
\bibliography{ref}



\end{document}